\theoremstyle{remark}
\theoremstyle{plain}
\newcounter{theoremintro} 
\newtheorem{introtheorem}[theoremintro]{Theorem}
\newtheorem{introcorollary}[theoremintro]{Corollary}
\newtheorem*{definition*}{Definition} 
\newtheorem*{theorem*}{Theorem} 
\newtheorem*{lemma*}{Lemma}
\newtheorem*{corollary*}{Corollary} 
\newtheorem*{hypothesis*}{Hypothesis}
\newtheorem{theorem}[subsection]{Theorem} 
\newtheorem{lemma}[subsection]{Lemma}
\newtheorem{corollary}[subsection]{Corollary}
\newtheorem{proposition}[subsection]{Proposition}
\newtheorem{claim}[subsection]{Claim}
\theoremstyle{definition}
\theoremstyle{remark}
\newtheorem{remark}[subsection]{Remark}
\numberwithin{equation}{section}
\newcommand{\R}{\mathbb{R}}
\newcommand{\C}{\mathbb{C}} 
\newcommand{\Z}{\mathbb{Z}}
\newcommand{\oN}{\overline{N}}
\newcommand{\SO}{\mathrm{SO}}
\newcommand{\SU}{\mathrm{SU}}
\newcommand{\Sp}{\mathrm{Sp}}
\newcommand{\F}{\mathbb{F}}
\newcommand{\cN}{\mathcal{N}}
\newcommand{\s}[1]{\langle #1 \rangle}
\newcommand{\hill}{\mathcal{H}}
\begin{document}

\title[slow growth representations of Sp(n, 1) at the edge of Cowling's strip]{slow exponential growth representations of Sp(n, 1) at the edge of Cowling's strip} 
\author{Pierre Julg}
\author{Shintaro Nishikawa}

\address{P.J.: Institut Denis Poisson, Universit\'e d'Orl\'eans, Collegium Sciences et Techniques B\^atiment de math\'ematiques, Rue de Chartres B.P. 6759 F-45067 Orl\'eans Cedex 2 - France} 
\email{pierre.julg@univ-orleans.fr}

\address{S.N.: Mathematisches Institut, Fachbereich Mathematik und Informatik der Universit\"at M\"unster, Einsteinstrasse 62, 48149 M\"unster, Germany} 
\email{snishika@uni-muenster.de}

\thanks{S.N.\ was supported by the Deutsche Forschungsgemeinschaft (DFG, German Research Foundation) under Germany's Excellence Strategy EXC 2044-390685587, Mathematics M\"unster: Dynamics-Geometry-Structure.}


\subjclass[2020]{Primary 22E46}

\keywords{Sp(n, 1), Uniformly bounded representations, Slow exponential growth representations}

\date{\today}

\maketitle

\begin{abstract} We obtain a slow exponential growth estimate for the spherical principal series representation $\rho_s$ of the Lie group $\Sp(n, 1)$ at the edge $(\mathrm{Re}(s)=1)$ of Cowling's strip $(|\mathrm{Re}(s)|<1)$ on the Sobolev space $\hill^\alpha(G/P)$ when $\alpha$ is the critical value $Q/2=2n+1$. As a corollary, we obtain a slow exponential growth estimate for the homotopy $\rho_s$ ($s\in [0, 1]$) of the spherical principal series which is required for the first author's program for proving the Baum--Connes conjecture with coefficients for $\Sp(n,1)$.
\end{abstract}

\tableofcontents

 \section*{Introduction}
 
Let $G=\SO_0(n,1)$, resp. $\SU(n,1)$, resp. $\Sp(n,1)$, resp. $\mathrm{F}_{4(-20)}$, a simple Lie group of real rank one. Consider its spherical principal series representation $\rho_s(g)$ on $C^\infty(G/P)$ for $s\in \C$ where $P$ is a minimal parabolic subgroup of $G$. Let us normalize the parameters $s$ so that $\rho_{it}(g)$ is unitary for $t\in \R$ with respect to the canonical $L^2$-norm on $G/P$ and so that it contains the trivial sub-representation at $s=1$. More specifically, we have
 \[
 \rho_s(g)=\lambda_g^{Q/2(1-s)}\rho(g)
 \]
 on $C^\infty(G/P)$ where $\rho(g)$ is the left-translation by $g$, $\lambda_g$ is a cocycle, and $Q=n-1, 2n, 4n+2, 22$ respectively.

The picture of the unitary dual is rather different between the case of $\SO_0(n,1)$ and $\SU(n,1)$, which have the Haagerup property (or Gromov's a-T-menability), and the case of $\Sp(n,1)$ and $\mathrm{F}_{4(-20)}$ which have Kazhdan's property (T) (see \cite[Chapter 5]{GJV19}). The complementary series (i.e. the $\rho_s$, $s$ real,  which are unitary for a suitable Hilbert scalar product) is defined for $-1<s<1$ in the first case, whereas in the second case a gap appears between the trivial representation and the complementary series, which ranges for $-{2n-1\over 2n+1}<s<{2n-1\over 2n+1}$ for $\Sp(n,1)$, $-{5\over 11}<s<{5\over 11}$ for $\mathrm{F}_{4(-20)}$ (see \cite{Kostant1960}).

However, it was pointed out in the 1980's by Michael Cowling that if one replaces ``unitary'' by ``uniformly bounded'', the difference disappears (see \cite{Cowling2010}). Namely, the representation $\rho_s$ for $s$ in the strip $-1<\mathrm{Re}(s)<1$ becomes a uniformly bounded representation of $G$ on a Hilbert space, if we consider an appropriate Sobolev space. Here, a representation is uniformly bounded in a sense that the operator norm  $||\rho_s(g)||$ of $\rho_s(g)$ is bounded by a constant $C>0$ which is independent of $g$ in $G$.

 Analysis on Heisenberg groups plays an important role as a nilpotent subgroup $N$ of $G$ provides the so-called open picture of the principal series representation $\rho_s$. For $-1<\mathrm{Re}(s)<1$, the representation $\rho_s$ is equivalently represented on the homogeneous Sobolev space $\dot\hill^\alpha(N)$ on $N$ with respect to a sub-Laplacian on $N$. This picture has an advantage that it uses the canonical norm for which $\rho_s(g)$ is unitary for $g$ in $P$. It then suffices to show that a single element $w$ in $G$ is bounded to see $\rho_s$ is uniformly bounded. On the other hand, for $s$ outside the strip $-1<\mathrm{Re}(s)<1$, $w$ is not bounded with respect to such a norm and hence the representation itself would not be well-defined.

In the compact picture, Cowling's result reads as follows: the representation $\rho_s$ on the Sobolev space $\hill^\alpha(G/P)$ for $s$ in the strip $-1<\mathrm{Re}(s)<1$ is uniformly bounded for $\alpha=(Q/2)s$ (see \cite{ACD04}).  Here, the Sobolev space $\hill^\alpha(G/P)$ is defined as the completion of $C^\infty(G/P)$ with respect to the Euclidean norm $||(1+\Delta_E)^{\alpha/2}\xi||_{L^2(G/P)}$ where $\Delta_E$ is a K-invariant sub-Laplacian on $G/P$ for a maximal compact subgroup $K$ of $G$. The compact picture has an advantage that the representation $\rho_s$ on $\hill^\alpha(G/P)$ is well-defined (bounded) for all $s$, in particular for $s=1$.
 
 In the first author's program for proving the Baum--Connes conjecture with coefficients for $\Sp(n,1)$ \cite{Julg19}, it is crucial to consider the growth of the operator norm $||\rho_s(g)||_{\hill^\alpha(G/P)\to \hill^\alpha(G/P)}$ of the representation $\rho_s(g)$ on $\hill^\alpha(G/P)$ for $\alpha=(Q/2)s$ and $0\leq s\leq 1$ as $s$ approaches to $1$ (see \cite[Section 8.3]{Julg19}). In particular, we would like to show that the homotopy $\rho_s(g)$ ($s\in [0, 1]$) of representations has slow exponential growth in a sense that for any $\epsilon>0$, there is $C=C(\epsilon)>0$ such that 
 \[
||\rho_s(g)||_{\hill^{(Q/2)s}(G/P)\to \hill^{(Q/2)s}(G/P)} \leq C e^{\epsilon l(g)}
\]
for all $s\in [0, 1]$ and for all $g$ in $G$. Here, $l(g)$ is a K-bi-invariant length function on $G$ defined as $l(g)=d_{G/K}(gK, K)$, i.e. $l(ka_tk')=|t|$ for any $k, k'$ in $K$ and $t\in \R$. 
 
With an application to this problem in mind, Astengo, Cowling and Di Blasio obtained a similar type of estimates for $\rho_s(g)$ on $\hill^\alpha(G/P)$. They showed that \cite[Theorem 5.1]{ACD04} for any $\alpha \in (-Q/2, Q/2)$ fixed, there is $C>0$ such that
 \[
 ||\rho_s(g)||_{\hill^{\alpha}(G/P)\to \hill^{\alpha}(G/P)}\leq Ce^{((Q/2)\mathrm{Re}(s)-\alpha)l(g)}
 \]
 for all $s\in \C$ and for all $g$ in $G$. 
 
 Although the above estimate suggests that the Sobolev space $\hill^{Q/2}(G/P)$ should provide a Hilbert space for which $\rho_s(g)$ has slow exponential growth when $\mathrm{Re}(s)=1$, the estimate itself is not enough to conclude that the slow exponential growth of the homotopy $\rho_s(g)$ ($s\in [0, 1]$).  Because of this, we shall study the growth of the representation $\rho_s(g)$ on $\hill^\alpha(G/P)$ when $\alpha$ is the critical value $\alpha=Q/2$. The following is our main result:

 \begin{introtheorem}(See Theorem \ref{thm_main}) For any $\epsilon>0$, there is $C=C(\epsilon)>0$ such that for all $s$ satisfying $\mathrm{Re}(s) = 1$, we have the following upper-bound for the operator norm $||\rho_s(g)||_{\hill^{Q/2}(G/P)\to \hill^{Q/2}(G/P)}$ of $\rho_s(g)$ on $\hill^{Q/2}(G/P)$ for all $g$ in $G$:
\[
||\rho_s(g)||_{\hill^{Q/2}(G/P)\to \hill^{Q/2}(G/P)} \leq C e^{\epsilon l(g)} (1+|\mathrm{Im}(s)|)^{Q/2}.
\]
\end{introtheorem}

As a corollary, by a simple application of complex interpolation, we show the desired slow exponential growth estimate for the homotopy $\rho_s(g)$ ($s\in [0, 1]$).

\begin{introcorollary}(See Corollary \ref{cor_main}) For any $\epsilon>0$, there is $C=C(\epsilon)>0$ such that for any $s\in [0, 1]$, we have for all $g$ in $G$,
\[
||\rho_s(g)||_{\hill^{(Q/2)s}(G/P)\to \hill^{(Q/2)s}(G/P)} \leq C e^{\epsilon l(g)}.
\]
\end{introcorollary}

We end our introduction by explaining the current status of the first author's program for proving the Baum--Connes conjecture with coefficients for $\Sp(n,1)$ \cite{Julg19}. In \cite{Julg19}, a BGG-cycle $(H, \pi, F)$ for the Kasparov's ring $R(G)=KK^G(\C, \C)$ was constructed. Let us call its class $\gamma_r=[H, \pi, F]$ in $R(G)$. In \cite{Julg19}, the remaining problems were to prove the following:
\begin{enumerate}
\item The element $\gamma_r$ is equal to the gamma element $\gamma$ in $R(G)$ \cite[Conjecture 1]{Julg19};
\item  The element $\gamma_r$ is equal to the identity $1_G$ in $R_\epsilon(G)$ for any $\epsilon>0$ \cite[Conjecture 2]{Julg19}.
\end{enumerate}
See \cite[Section 1]{Julg19} for the definition of the gamma element $\gamma$. It suffices to say that it is constructed for all almost connected groups and that the construction is based on the de-Rham complex on the Riemannian symmetric space $G/K$, whereas the element $\gamma_r$ is based on the BGG-complex on the spherical variety $G/P$. The ring $R_\epsilon(G)$ in the second item is defined in the same way as $R(G)$ except that the representations of $G$ on Hilbert spaces for defining cycles $(H, \pi, F)$ may have exponential  growth $||\pi(G)||\leq Ce^{\epsilon}$ (to be precise, one has to fix the parameter $C_\epsilon>0$ for each $\epsilon$). As explained in \cite{Julg19}, the validity of the two items imply the Baum--Connes conjecture with coefficients for $\Sp(n, 1)$. 
 
 Our main result (Corollary \ref{cor_main}) implies that the second item indeed holds.
 
\begin{introtheorem} The item (2) holds.
\end{introtheorem}

Therefore, we are left with the problem (1), which is essentially reduced to a problem of showing the compactness of the commutator $[S_0, f]$ for any continuous function $f$ on the disk $G/K\cup G/P$ and for a certain bounded operator $S_0$, which is essentially a Poisson transform (see \cite[Section 7.4]{Julg19}).

Here, it is perhaps worth to recall the following hypothesis made by Kasparov in \cite{Kasparov85}.

\begin{hypothesis*}[{\cite[Section 5.11]{Kasparov85}}] For any almost connected group $G$, the restriction to a maximal compact subgroup $K$ determines an isomorphism $R_r(G)\cong R(K)$.
\end{hypothesis*}

Here, the ring $R_r(G)$ is defined in the in the same way as $R(G)$ except the representations of $G$ have to be weakly contained in the left-regular representations. There is a canonical map $R_r(G)\to R(G)$ which is an isomorphism precisely when $G$ is $K$-amenable and is never surjective for $G$ with Kazhdan's property (T). If we denote the support of the gamma element by $\gamma R(G)$, there is a natural map
\[
\gamma R(G) \to R_r(G).
\]
Since we already know $\gamma R(G)\cong R(K)$, the hypothesis of Kasparov is equivalent to saying this map is an isomorphism. If this is the case, the item (1), $\gamma_r=\gamma$, would follow since both are elements in $R_r(G)$ that have the property that its restriction to $R(K)$ is the identity $1_K$. In other words, the validity of the hypothesis would immediately imply the item (1), and hence the Baum--Connes conjecture with coefficients for $\Sp(n, 1)$. However, the hypothesis would be strictly harder to prove than the item (1), let alone we are not sure whether it is true in general.

\section*{Acknowledgements} 
The first author would like to thank Michael Cowling for many fruitful discussions during the last 30 years. He is also grateful to Nigel Higson and Vincent Lafforgue for pointing out the necessity to consider slow exponential growth representations in the context of $K$-theory of group $C^*$-algebras and Baum-Connes conjecture.

The second author would like to thank Nigel Higson for teaching him about representation theory during the past years. 

Part of this research was begun within the online Research Community on Representation Theory and Noncommutative Geometry sponsored by the American Institute of Mathematics.

 \section{Preliminaries}
Let $\F=\R, \C$ or $\mathbb{H}$ be the field of real numbers, complex numbers or quaternions. For $z\in \F$, we define
\[
|z|^2=z^\ast z, \,\,\, \mathrm{Re}(z)=\frac{z+z^\ast}2, \,\, \mathrm{Im}(z)=\frac{z-z^\ast}2.
\]
We also write $\mathrm{Im}(\F)\subset \F$ to be the image of $\mathrm{Im}(\,\,)$ on $\F$.

A sesquilinear form $q$ on a right vector space $\F^{n+1}$ over $\F$ is given by
   \[
   q(z, w)=-\bar z_0 w_0 + \sum_{j=1}^{j=n}\bar z_j w_j
   \]
   for $z, w$ in $\F^{n+1}$. 
   
Let $O(q)$ be the group of $(n+1)\times (n+1)$ matrices over $\F$ which act on $\F^{n+1}$ from left and preserve the quadratic form $q$.

The Lie group $\SO_0(n, 1)$ is the connected component of the identity of $O(q)$ for $\F=\R$, $\SU(n, 1)$ is $O(q)\cap SL(n+1, \mathbb{C})$ for $\F=\mathbb{C}$ and $\Sp(n, 1)$ is $O(q)$ for $\F=\mathbb{H}$.  

In this paper, we consider $G=\Sp(n, 1)$ ($n\geq2$) and thus $\F=\mathbb{H}$ but all the results have obvious analogues for $\SO_0(n, 1)$ and $\SU(n, 1)$.  The closed subgroup $K$ of $G$ that preserves the canonical Euclidean metric on $\F^{n+1}$ is a maximal compact subgroup of $G$.

The Lie group $G$ naturally acts on the projective space $\mathbb{P}(\F^{n+1})$ over $\F$ and we have
\begin{align*}
G\cdot [1, 0, \cdots, 0]^T & = \{ \, [z_0, z_1, \cdots, z_n]^T \in \mathbb{P}(\F^{n+1})  \,  \mid  \,\,\, \sum_{1\leq j \leq n}|z_j|^2<|z_0|^2 \, \} \\
&  =  \{ \, [1, z_1, \cdots, z_n]^T \in  \mathbb{P}(\F^{n+1})  \mid  \,\,\, \sum_{1\leq j \leq n}|z_j|^2<1 \, \}.
\end{align*}
The isotropy subgroup of $G$ at the point $[1, 0, \cdots, 0]^T$ is $K$. In this way, $G/K$ can be viewed as the disk $\mathbb{D}^{4n}$ in $\F^{n}$.

The boundary of $G/K$ in $\mathbb{P}(\F^{n+1})$ is 
\begin{align*}
G\cdot [1, 0, \cdots, 0, 1]^T & = \{ \, [z_0, z_1, \cdots, z_n]^T \in \mathbb{P}(\F^{n+1})  \,  \mid  \,\,\, \sum_{1\leq j \leq n}|z_j|^2=|z_0|^2 \, \} \\
&  =  \{ \, [1, z_1, \cdots, z_n]^T \in  \mathbb{P}(\F^{n+1})  \mid  \,\,\, \sum_{1\leq j \leq n}|z_j|^2=1 \, \}.
\end{align*}
The isotropy subgroup $P$ of $G$ at the point $[1, 0, \cdots, 0, 1]^T$ is a minimal parabolic subgroup of $G$. In this way, $G/P$ can be viewed as the sphere $S^{4n-1}$ in $\F^{n}$.

Let $A$ be a closed subgroup of $G$ defined as
   \[
   A= \{\, a_t\in G \mid t\in \R \, \}, \,\,\,    a_t = \begin{bmatrix} \cosh t & 0 & \sinh t \\ 0 & 1 & 0 \\ \sinh t & 0 & \cosh t \end{bmatrix}=U \begin{bmatrix} e^{-t} & 0 & 0 \\ 0 & 1 & 0 \\ 0 & 0 & e^{t} \end{bmatrix} U^{-1},
   \]
where each matrix has an $(n-1)\times (n-1)$ matrix in the middle entry and
   \[
   U=U^*=U^{-1} = \begin{bmatrix} -1/\sqrt2 & 0 &  1/\sqrt2 \\ 0 & 1 & 0 \\ 1/\sqrt2 & 0 &  1/\sqrt2 \end{bmatrix}.
   \]

We have $KA^+K$ decomposition
   \[
   G=KA^+K
   \]
   where $A^+$ consists of $a_t$ for $t\geq0$. Let $M$ be the centralizer of $A$ in $K$. Consider the restricted root space decomposition of $\mathfrak{g}$ with respect $\mathfrak{a}$: 
   \[
   \mathfrak{g} = \mathfrak{a}\oplus \mathfrak{m} \oplus \mathfrak{n} \oplus \overline{\mathfrak{n}}.
   \]
We have
   \[
   \mathfrak{n} =    \mathfrak{n}_1 \oplus    \mathfrak{n}_2, 
   \]
   \[
\mathfrak{n}_1=\{\,U\begin{bmatrix} 0 & 0  & 0 \\ -X/\sqrt2 & 0 & 0 \\ 0 & X^\ast/\sqrt2 & 0  \end{bmatrix}U^{-1} = \begin{bmatrix} 0 & X^\ast/2  & 0 \\  X/2 & 0 & -X/2 \\ 0 & X^\ast/2 & 0  \end{bmatrix} \in    \mathfrak{g}    \mid \, X \in \mathbb{F}^{n-1}  \},
\]
\[
\mathfrak{n}_2=  \{\, U\begin{bmatrix} 0 & 0  & 0 \\  0& 0 & 0 \\ -Z & 0 & 0  \end{bmatrix}U^{-1}=  \begin{bmatrix} Z/2 & 0  & -Z/2 \\  0& 0 & 0 \\ Z/2 & 0 & -Z/2  \end{bmatrix} \in    \mathfrak{g}  \mid \, Z \in \mathrm{Im}(\mathbb{F}) \},
   \]
   and
      \[
   \overline{\mathfrak{n}} =    \overline{\mathfrak{n}}_1 \oplus    \overline{\mathfrak{n}}_2, 
   \]
   \begin{equation}\label{eq_n_1}
\overline{\mathfrak{n}}_1=\{\,U\begin{bmatrix} 0 & X^\ast/\sqrt2  & 0 \\  0 & 0 & -X/\sqrt2 \\ 0 & 0 & 0  \end{bmatrix}U^{-1} = \begin{bmatrix} 0 & -X^\ast/2  & 0 \\  -X/2 & 0 & -X/2 \\ 0 & X^\ast/2 & 0  \end{bmatrix} \in    \mathfrak{g}    \mid \, X \in \mathbb{F}^{n-1}  \},
\end{equation}
   \begin{equation}\label{eq_n_2}
\overline{\mathfrak{n}}_2=  \{\, U\begin{bmatrix} 0 & 0  & -Z \\  0& 0 & 0 \\ 0 & 0 & 0  \end{bmatrix}U^{-1} = \begin{bmatrix} Z/2 & 0  & Z/2 \\  0& 0 & 0 \\ -Z/2 & 0 & -Z/2  \end{bmatrix} \in    \mathfrak{g}  \mid \, Z \in \mathrm{Im}(\mathbb{F}) \}.
\end{equation}
With respect to these expressions, we shall use the coordinates $(X, Z)\in \mathbb{F}^{n-1}\oplus \mathrm{Im}(\F)$ to express elements in $\overline{\mathfrak{n}}$. In these coordinates, the Lie bracket on $\overline{\mathfrak{n}}$ is 
\[
[(X_1, Z_1), (X_2, Z_2)]=(0, \mathrm{Im}(X_1^\ast X_2)).
\]
The exponential map $\exp\colon \overline{\mathfrak{n}} \to \overline N\subset G$ (as well as $\exp\colon {\mathfrak{n}} \to N$) is a diffeomorphism. For $(X, Z)$ in $\overline{\mathfrak{n}}$, we have
\[
\mathrm{exp}(X, Z) =  \begin{bmatrix} 1+X^\ast X/8 + Z/2 & -X^\ast/2 & X^\ast X/8 +Z/2 \\  -X/2 & 1 & -X/2 \\ -X^\ast X/8 - Z/2 & X^\ast/2 & 1-X^\ast X/8-Z/2  \end{bmatrix} \in \overline N \subset G.
\]
We shall identify $\overline{\mathfrak{n}}$ as $\F^{n-1}\oplus \mathrm{Im}(\F)$ and equip it with the standard Euclidean metric. The group $\overline{N}$ is unimodular and we fix a Haar measure given by $d\overline{n}=2^{\mathrm{dim}(\overline{\mathfrak{n}}_2)}dXdZ$ on $\overline{N}\cong \overline{\mathfrak{n}} \cong F^{n-1}\oplus \mathrm{Im}(\F)$ where $dX$ and $dZ$ are the elements of the Lebesgue measures on $\F^{n-1}$ and $\mathrm{Im}(\F)$.  

We have $KAN$ decomposition
   \[
   G=KAN,
   \]
  Langlands decomposition
 \[
 P=MAN,
 \]
 and Bruhat decomposition
  \[
G=Pw P \sqcup P
 \]
 where $w\in K$ is any representative of the non-trivial element in the Weyl group $N_K(A)/Z_K(A)$.

Let $o=[1, 0, \cdots, 0, 1]^T$ in $G/P\subset \mathbb{P}(\F^{n+1})$. Recall that the minimal parabolic subgroup $P$ is the isotropy subgroup of $G$ at $o$. The Cayley transform
\begin{equation}\label{eq_cay}
\mathcal{C}\colon \overline{N} \to G/P
\end{equation}
is defined by sending $n$ in $\overline{N}$ to $n\cdot o=nP$ in $G/P$. The Cayley transform is a diffeomorphism from $\overline{N}$ onto an open dense subset $G/P - \{-o\}$ where $-o=[1, 0, \cdots, 0, -1]^T=w\cdot o$.

Recall that $G/P$ is naturally viewed as the sphere $S^{4n-1}\subset \mathbb{F}^n$ and the point $o$ corresponds to the point $(0, \cdots, 0, 1)$ in the sphere $S^{4n-1}$.

 With this identification of $G/P\subset \mathbb{P}(\F^{n+1})$ and $S^{4n-1}\subset \F^n$, we have for $(X, Z) \in \mathbb{F}^{n-1}\oplus \mathrm{Im}(\F) =\overline{\mathfrak{n}}$, 
\begin{align*}
\mathcal{C}(\exp(X, Z) &)= \exp(X, Z) \cdot o = \begin{pmatrix} -X \\ 1-X^*X/4 - Z \end{pmatrix}  \left( 1+X^*X/4 + Z \right)^{-1}  \\
 & =  \begin{pmatrix} -X(1+X^*X/4 - Z ) \\ 1-(X^*X)^2/16  -|Z|^2 -2Z \end{pmatrix}  \left( (1+X^*X/4)^2 + |Z|^2 \right)^{-1} 
\end{align*}
in $S^{4n-1}\subset \F^n$. Here, the first row represents a vector in $\F^{n-1}$ and the second row represents a vector in $\F$. Setting
\[
||X||^2=X^\ast X, \,\,\, B(X, Z)=(1+||X||^2/4)^2 + |Z|^2,
\]
we have
\begin{equation}\label{eq_cay_formula}
\mathcal{C}(\exp(X, Z)) =  B(X, Z)^{-1} \begin{pmatrix} -X(1+||X||^2/4 - Z ) \\ 1-||X||^4/16  -|Z|^2 -2Z \end{pmatrix}.
\end{equation}
This is essentially the same formula as the one in \cite{ACD04}. In \cite{ACD04}, it is defined from $N$ to $G/P - \{o\}$.

  The Riemannian symmetric space $G/K\subset \mathbb{P}(\F^{n+1})$ has a canonical $G$-invariant metric for which the distance between $eK=[1, 0, \cdots, 0]^T$ and $a_tK=[\cosh t, 0, \cdots, 0, \sinh t]^T=[1, 0, \cdots, 0, \tanh t]^T$ is $|t|$ for $t\in \R$.

For $x, y$ in $G/K$, the Busemann cocycle $\gamma_{x,y}$ is a smooth function on $G/P$ defined as
\[
\gamma_{x,y}(z)=\lim_{z'\to z} \left( d_{G/K}(z', y) -d_{G/K}(z', x) \right)=  \log   \left|  \frac{q(y, z) q(x, x)^{1/2}}{q(x, z)q(y, y)^{1/2}} \right| 
\]
for $z \in G/P$ (see  \cite[Proposition 3.1.1]{CCJJV01}).

Let $d\mu_x$ be the normalized $K_x$ invariant measure on $G/P$ for the isotropy group $K_x$ at $x$ of the $G$-action on $G/K$. We have \cite[Proposition 3.1.2]{CCJJV01}
\[
d\mu_y= e^{-Q\gamma_{x,y}}d\mu_x
\]\
for $Q=\mathrm{dim}(\mathfrak{n}_1)+ 2\mathrm{dim}(\mathfrak{n}_2) = 4(n-1)+2(3)=4n+2$.

We simply write $0$ for the fixed origin $eK$ of $G/K$. Let
\begin{equation}\label{eq_lambda}
\lambda_g=e^{-\gamma_{0, g0}}
\end{equation}
so that
\[
d\mu_{g0}=\lambda_g^Qd\mu_0.
\]
We have for $a_t\in A$ and for $z=[1, z_1, \cdots, z_n]^T$ in $G/P$,
\begin{align*}
\gamma_{0,a_t0}(z) &= \log   \left| 1-(\tanh t)z_n \right|  - (1/2)\log(1-(\tanh t)^2) \\
 & =  \log   \left| \cosh t-(\sinh t)z_n \right| 
\end{align*}
and
\begin{equation}\label{eq_a_t}
\lambda_{a_t}=e^{-\gamma_{0, a_t0}} =    \left| 1-(\tanh t)z_n \right|^{-1} \left( \cosh t \right)^{-1} =   \left| \cosh t-(\sinh t)z_n \right|^{-1}.
\end{equation}

We now briefly recall from \cite{Julg19}, the definition of a $G$-equivariant sub-bundle $E$ of the tangent-bundle $T(G/P)$ and its associated sub-Laplacian $\Delta_E$. Denote by $P_x$, the isotropy subgroup of $G$ at $x$ in $G/P$. The fiber of the cotangent bundle $T^\ast(G/P)$ at $x$ is naturally identified as $(\mathfrak{g}/\mathfrak{p}_x)^\ast$. The latter space is naturally identified as the annihilator $\mathfrak{p}_x^\perp$ of $\mathfrak{p}_x$ in $\mathfrak{g}$ with respect to the Killing form on $\mathfrak{g}$. The space $\mathfrak{p}_x^\perp$ coincides with the nilpotent radical $\mathfrak{n}_x$ of $\mathfrak{p}_x$ which is a 2-step nilpotent Lie algebra with center $\mathfrak{z}_x=[\mathfrak{n}_x, \mathfrak{n}_x]$. A $G$-equivariant sub-bundle $F$ of $T^\ast(G/P)$ is defined so that the fiber at $x$ is $\mathfrak{z}_x$. A $G$-equivariant sub-bundle $E$ of the tangent-bundle $T(G/P)$ is defined as the annihilator $F^\perp$ of $F$. 

Let $d_E$ be the differential operator from $C^\infty(G/P)$ to the section $\Gamma(E^\ast)$ of $E^\ast$, defined as the composition of the de-Rham differential operator and the restriction map from $\Gamma(T^\ast(G/P))$ to $\Gamma(E^\ast)$. The adjoint $d_E^\ast$ is defined with respect to the standard $K$-invariant metric on $G/P$. The $K$-invariant sub-Laplacian on $C^\infty(G/P)$ is defined by $\Delta_E=d_E^\ast d_E$. The sub-Laplacian $\Delta_E$ is a self-adjoint, positive operator on the Hilbert space $L^2(G/P, d\mu_0)$. It has compact resolvent and one-dimensional kernel consisting of constant functions on $G/P$.

For any $\alpha$ in $\R$, let $\hill^{\alpha}(G/P)$ be the Sobolev space defined by the completion of $C^\infty(G/P)$ by the norm
\[
||\xi||_{\hill^{\alpha}(G/P)}= ||(1+\Delta_E)^{\alpha/2}\xi||_{L^2(G/P, d\mu_0)}
\]
for $\xi$ in $C^\infty(G/P)$.
  

 \section{Main result}
As in \cite[Section 8.1]{Julg19},  for $0\leq \mathrm{Re}(s) \leq 1$, let
\[
\pi_s(g)=(1+\Delta_E)^{(Q/4)s}\lambda_g^{(Q/2)(1-s)}\rho(g)(1+\Delta_E)^{-(Q/4)s}
\]
defined on the Hilbert space $\hill^0(G/P)=L^2(G/P, d\mu_0)$. Here, $\rho(g)$ for $g$ in $G$ denotes the left-translation action on the functions on $G/P$ and the cocycle $\lambda_g$ is as in \eqref{eq_lambda}.

Note, 
\[
\pi_0(g)=\lambda_g^{Q/2}\rho(g)
\]
is unitary on $L^2(G/P, d\mu_0)$. This is because 
\[
\rho(g)d\mu_0=(g^{-1})^\ast d\mu_0 = d\mu_{g0}=\lambda_g^Qd\mu_0
\]
so we have
\[
\s{f_1, f_2}_{\hill^0}= \int_{G/P}\overline{f_1}f_2 d\mu_0 =  \int_{G/P}\rho(g)(\overline{f_1}f_2 d\mu_0) 
\]
\[
= \int_{G/P}\rho(g)(\overline{f_1}f_2) \lambda_g^Qd\mu_0 = \s{\pi_0(g)f_1, \pi_0(g)f_2}.
\]
Similarly, if $\mathrm{Re}(s)=0$, $\pi_s(g)$ is unitary on $L^2(G/P, d\mu_0)$.

The following is our main result.

\begin{theorem}\label{thm_main} For any $\epsilon>0$, there is $C=C(\epsilon)>0$ such that for all $s$ satisfying $\mathrm{Re}(s) = 1$, we have the following upper-bound for the operator norm $||\pi_s(g)||$ of $\pi_s(g)$ on $\hill^0(G/P)$ for all $g$ in $G$:
\[
||\pi_s(g)|| \leq C e^{\epsilon l(g)} (1+|\mathrm{Im}(s)|)^{Q/2}.
\]
Here, $l(g)$ is a K-bi-invariant length function on $G$ defined as $l(g)=d_{G/K}(gK, K)$, i.e. $l(ka_tk')=|t|$ for any $k, k'$ in $K$ and $t\in \R$.
\end{theorem}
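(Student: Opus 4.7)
The plan is to first use the $KA^+K$ decomposition to reduce the problem to bounding $\|\pi_s(a_t)\|$ for $t = l(g) \geq 0$. Since $\lambda_k = 1$ for $k \in K$, the measure $d\mu_0$ is $K$-invariant, and $\Delta_E$ commutes with $\rho(k)$, the operator $\pi_s(k)$ is unitary on $\hill^0(G/P)$ for every $k \in K$. Hence for $g = k_1 a_t k_2$ one has $\|\pi_s(g)\| = \|\pi_s(a_t)\|$.

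For $s = 1 + i\tau$, one strips off the $L^2$-unitary factors $(1+\Delta_E)^{\pm iQ\tau/4}$ from the definition of $\pi_s(a_t)$ and reduces to bounding
\[
T_{t,\tau} := (1+\Delta_E)^{Q/4}\, \lambda_{a_t}^{-iQ\tau/2}\, \rho(a_t)\, (1+\Delta_E)^{-Q/4}.
\]
The factorization $T_{t,\tau} = M_{t,\tau} \cdot \pi_1(a_t)$ with
\[
M_{t,\tau} := (1+\Delta_E)^{Q/4}\, \lambda_{a_t}^{-iQ\tau/2}\, (1+\Delta_E)^{-Q/4}
\]
splits the estimate into two parts: (a) the slow exponential bound $\|\pi_1(a_t)\| \leq C_\epsilon e^{\epsilon t}$, which is the critical-exponent case $\tau = 0$ of the theorem, and (b) the polynomial-in-$\tau$ bound $\|M_{t,\tau}\| \leq C_\epsilon (1+|\tau|)^{Q/2} e^{\epsilon t}$.

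For (b), the multiplier $\lambda_{a_t}^{-iQ\tau/2} = \exp(-iQ\tau/2 \cdot \log \lambda_{a_t})$ has modulus one, so it is itself $L^2$-unitary. Its conjugation by the fractional power $(1+\Delta_E)^{Q/4}$---a pseudo-differential operator of sub-Riemannian order $Q/2$---can be analyzed by a commutator expansion: each sub-Riemannian derivative of the phase produces a factor of $|\tau|$ times a derivative of the Busemann-type cocycle $\log \lambda_{a_t}$, and tracking $Q/2$ derivatives yields the polynomial factor $(1+|\tau|)^{Q/2}$. Because derivatives of $\log \lambda_{a_t}$ do blow up with $t$ near the fixed points $o, -o$ of $a_t$ on $G/P$, a dilation-compatible decomposition is needed to absorb this growth into the $e^{\epsilon t}$ factor.

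The main obstacle is (a), the slow exponential growth of $\pi_1(a_t)$ at the critical Sobolev exponent $\alpha = Q/2$ where the Astengo--Cowling--Di Blasio estimate degenerates. I would pass to the open picture via the Cayley transform, under which $\rho(a_t)$ becomes the anisotropic dilation $(X,Z) \mapsto (e^t X, e^{2t} Z)$ on $\overline{N} \cong \F^{n-1} \oplus \mathrm{Im}(\F)$, and compare the $K$-invariant sub-Laplacian $\Delta_E$ with the left-invariant, dilation-equivariant sub-Laplacian on $\overline{N}$ (the two differing by a smooth Jacobian weight and a first-order correction). The core of the argument should be a dilation-compatible (Littlewood--Paley or $K$-type) decomposition of $L^2(G/P)$ in which each scale is estimated using the ACD bound at a Sobolev index slightly below $Q/2$; the $\epsilon$-small exponent then arises from summing these scale contributions after exploiting the exact scaling equivariance of the model sub-Laplacian on $\overline{N}$.
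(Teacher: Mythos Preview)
Your architectural reduction --- $KA^+K$, stripping the unitary factors $(1+\Delta_E)^{\pm iQ\tau/4}$, and the factorization $T_{t,\tau} = M_{t,\tau}\cdot \pi_1(a_t)$ --- matches the paper exactly. The divergence is in how (a) and (b) are actually executed, and in which one carries the weight.

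For (a), the paper obtains the stronger \emph{linear} bound $\|\pi_1(g)\| \leq C(1+l(g))$ by a short structural argument, not a scale decomposition: one replaces the $\hill^{Q/2}$-norm by the equivalent norm $\|\xi\|'^2 = \|\xi\|_{L^2}^2 + \|(1+\Delta_E)^{Q/4-1/2} d_E\xi\|^2$, observes that the quotient by the constants $\C 1_{G/P}$ embeds via $d_E$ into a principal series on sections of $E^*$, and quotes uniform boundedness of that principal series from \cite{N20}. Your Littlewood--Paley/ACD-below-$Q/2$ plan is a different route and, as written, does not explain how the loss incurred by working at $Q/2-\delta$ is recovered when summing scales; this is exactly the critical-exponent difficulty you flag but do not resolve.

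For (b), which is in fact where most of the paper's effort goes, your commutator sketch misses the concrete mechanism. The paper localizes with a cutoff $\chi$ near $o$: away from $o$, $\lambda_{a_t}^{-iQ\tau/2}$ converges smoothly as $t\to\infty$, giving a bound $C(1+|\tau|)^{Q/2}$ uniform in $t$. Near $o$, one transfers to $\overline N$ via the Cayley transform and uses the explicit formula $\lambda_{a_t}\circ\mathcal{C} = e^{-t}B^{1/2}/B_t^{1/2}$ with $B_t(X,Z)=(e^{-2t}+\|X\|^2/4)^2+|Z|^2$. The multiplier $\chi B_t^{ib}B^{-ib}$ is then bounded on $\hill^{Q/2}(\overline N)$ via the ACD multiplier theorem (their Theorem~3.6) combined with the pointwise estimates $|D^s B_t^{ib}|\leq C_s B_t^{-s/4}(1+|b|)^s$; the factor $e^{\epsilon t}$ appears precisely from one step $\hill^{Q/2}\to\hill^{Q/2-\epsilon}$ (via multiplication by a cutoff) together with the inequality $B_t^{-\epsilon/4}\leq e^{\epsilon t}$. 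None of these ingredients --- the $B_t$ formula, the ACD multiplier theorem, or the $B_t^{-\epsilon/4}$ mechanism --- appear in your sketch, so as it stands (b) is a heuristic rather than a proof.
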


\begin{corollary}\label{cor_main} For any $\epsilon>0$, there is $C=C(\epsilon)>0$ such that for any $t\in [0, 1]$, we have for all $g$ in $G$,
\[
||\pi_t(g)|| \leq C e^{\epsilon l(g)}.
\]
\end{corollary}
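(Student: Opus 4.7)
The plan is to derive Corollary \ref{cor_main} from Theorem \ref{thm_main} by a Hadamard three-lines (Phragm\'en--Lindel\"of) interpolation of the family $s\mapsto \pi_s(g)$ on the closed strip $\{s\in\C:0\leq\mathrm{Re}(s)\leq 1\}$, for each fixed $g\in G$. The two boundary estimates are already in hand: on $\mathrm{Re}(s)=0$ the operator $\pi_s(g)$ is unitary on $\hill^0(G/P)$ by the short computation preceding Theorem \ref{thm_main}, so $||\pi_s(g)||\leq 1$; on $\mathrm{Re}(s)=1$, Theorem \ref{thm_main} provides $||\pi_s(g)||\leq C e^{\epsilon l(g)}(1+|\mathrm{Im}(s)|)^{Q/2}$.

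For smooth vectors $\xi,\eta\in C^\infty(G/P)$, the natural object to interpolate is the matrix coefficient
\[
F(s) = \s{\pi_s(g)\xi,\eta}_{\hill^0(G/P)} = \s{\lambda_g^{(Q/2)(1-s)}\rho(g)(1+\Delta_E)^{-(Q/4)s}\xi,\, (1+\Delta_E)^{(Q/4)\bar s}\eta}_{\hill^0(G/P)}.
\]
Using the Borel functional calculus for the positive self-adjoint operator $1+\Delta_E$ on smooth vectors, together with the entire $s$-dependence of the scalar cocycle factor $\lambda_g^{(Q/2)(1-s)}$, the function $F$ is holomorphic in the open strip, continuous on its closure, and locally bounded in $s$ for fixed $g,\xi,\eta$. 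To absorb the polynomial blow-up $(1+|\mathrm{Im}(s)|)^{Q/2}$ on the right boundary, introduce the Gaussian damping $H(s)=e^{\delta s^2}F(s)$ for a small $\delta>0$. Since $|e^{\delta s^2}|=e^{\delta(\mathrm{Re}(s)^2-\mathrm{Im}(s)^2)}$ decays faster than any polynomial on each vertical line, combining this with the local boundedness of $F$ makes $|H(s)|$ bounded on the whole closed strip, with
\[
\sup_{\mathrm{Re}(s)=0}|H(s)|\leq ||\xi||\,||\eta|| \qquad \text{and} \qquad \sup_{\mathrm{Re}(s)=1}|H(s)|\leq C_\delta e^{\epsilon l(g)}||\xi||\,||\eta||,
\]
where $C_\delta=e^\delta \sup_{\tau\in\R}(1+|\tau|)^{Q/2}e^{-\delta\tau^2}<\infty$.

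Applying the Hadamard three-lines theorem to $H$ then yields, for each $t\in[0,1]$,
\[
|H(t)|\leq (||\xi||\,||\eta||)^{1-t}(C_\delta e^{\epsilon l(g)}||\xi||\,||\eta||)^{t}=C_\delta^{t}\,e^{\epsilon t l(g)}||\xi||\,||\eta||,
\]
and dividing by $e^{\delta t^2}\geq e^{-\delta}$ on $[0,1]$ and using $tl(g)\leq l(g)$ recovers $|F(t)|\leq e^{\delta}\max(1,C_\delta)e^{\epsilon l(g)}||\xi||\,||\eta||$. Taking the supremum over smooth vectors $\xi,\eta$ of norm one gives the sought bound $||\pi_t(g)||\leq C'(\epsilon)e^{\epsilon l(g)}$ uniformly in $t\in[0,1]$ and $g\in G$. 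The only technical point is the legitimacy of the complex functional calculus for making $s\mapsto F(s)$ satisfy the hypotheses of the three-lines theorem (holomorphy in the open strip, continuity on its closure, and qualitative local boundedness), which is essentially formal thanks to the spectral structure of $1+\Delta_E$; once this is in place the interpolation is routine, and the arbitrariness of $\epsilon>0$ in Theorem \ref{thm_main} absorbs the constant $\max(1,C_\delta)e^\delta$ produced by the Gaussian damping.
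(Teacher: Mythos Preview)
Your proposal is correct and follows essentially the same route as the paper: fix $g$, form the matrix coefficient $s\mapsto\langle\pi_s(g)\xi,\eta\rangle$, damp it by a Gaussian factor $e^{\delta s^2}$ to kill the polynomial $(1+|\mathrm{Im}(s)|)^{Q/2}$ on the line $\mathrm{Re}(s)=1$, and apply the three-lines theorem using unitarity on $\mathrm{Re}(s)=0$ and Theorem~\ref{thm_main} on $\mathrm{Re}(s)=1$. The only cosmetic difference is that the paper tests against $K$-finite vectors rather than smooth vectors, which makes the holomorphy and a-priori boundedness of $F$ on the strip completely elementary (finite sums of terms $(1+\mu_j)^{\pm(Q/4)s}$); with smooth vectors you should say ``bounded'' rather than ``locally bounded'' on the strip (which does hold, by the easy estimate $\|(1+\Delta_E)^{(Q/4)\bar s}\eta\|\le\|(1+\Delta_E)^{Q/4}\eta\|$ for $0\le\mathrm{Re}(s)\le1$), since local boundedness alone would not justify the boundedness of $H$ needed for Hadamard.
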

\begin{proof} For any $g$ in $G$ and for any $K$-finite functions $u$ and $v$ in $L^2(G/P)$, we consider a holomorphic function 
\[
s\mapsto \s{\pi_s(g)u, v}
\] 
for $s$ in the strip $0\leq \mathrm{Re}(s) \leq 1$. For each fixed $g$ and $u, v$, it is not hard to see that this holomorphic function is bounded on the strip. Now fix any positive constant $A>0$ and consider a holomorphic function 
\[
s\mapsto \s{\pi_s(g)u, v}e^{As^2}
\] 
on the strip. For each $g$ and $u, v$ fixed, by the three lines theorem, we have
\[
\sup_{\mathrm{Re}(s)=t} |\s{\pi_s(g)u, v}e^{As^2}| \leq  \left( \sup_{\mathrm{Re}(s)=0} |\s{\pi_s(g)u, v}e^{As^2}|  \right)^{1-t} \left( \sup_{\mathrm{Re}(s)=1} |\s{\pi_s(g)u, v}e^{As^2}|  \right)^t
\]
for any $0\leq t\leq 1$. We have
\[
|\s{\pi_t(g)u, v}|e^{At^2} \leq \sup_{\mathrm{Re}(s)=t} |\s{\pi_s(g)u, v}e^{As^2}|.
\]
For $\mathrm{Re}(s)=0$, $\pi_s(g)$ is unitary so we have
\[
\sup_{\mathrm{Re}(s)=0} |\s{\pi_s(g)u, v}e^{As^2}|   \leq  ||u|| ||v||.
\]
For $\mathrm{Re}(s)=1$, by Theorem \ref{thm_main}, 
\begin{align*}
\sup_{\mathrm{Re}(s)=1} |\s{\pi_s(g)u, v}e^{As^2}|  & \leq   C e^{\epsilon l(g)} ||u|| ||v|| \sup_{b \in \R}\left|(1+|b|)^{Q/2}e^{A(1+ib)^2} \right| \\
 & \leq  C' e^{\epsilon l(g)} ||u|| ||v|| 
 \end{align*}
where 
\[
C'=   C  \sup_{b \in \R}\left|(1+|b|)^{Q/2}e^{A(1+ib)^2} \right| < +\infty.
\]
Combining all these, we get 
\[
\sup_{0\leq t\leq 1}|\s{\pi_t(g)u, v}| \leq C' e^{\epsilon l(g)} ||u|| ||v||.
\]
The constant $C'$ does not depend on $g$, $u$ and $v$ so we are done.
\end{proof}


\section{Some reduction}
We begin our proof of Theorem \ref{thm_main}. We will eventually reduce this problem to a certain technical estimate of functions.  First of all,
\begin{proposition}\label{prop_at1} For $s=1$, we have
\[
||\pi_1(g)|| \leq C(1+l(g)) 
\]
for $C>0$ independent of $g$ in $G$, i.e. $\pi_1(g)$ is of linear growth.
\end{proposition}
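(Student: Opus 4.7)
My plan is to reduce the estimate to the one-parameter subgroup $A$ via the $KAK$ decomposition, and then to extract the linear growth as the endpoint defect of scale invariance at the critical Sobolev exponent $Q/2$, working in the open picture on $\overline{N}$.

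First, I would reduce to $g=a_t$. Since $\Delta_E$ is $K$-invariant, $T := (1+\Delta_E)^{Q/4}$ commutes with $\rho(k)$ for every $k \in K$. Moreover $\lambda_k \equiv 1$ (as $k\cdot 0 = 0$) and $\rho(k)$ is unitary on $\hill^0(G/P) = L^2(G/P, d\mu_0)$ because $d\mu_0$ is $K$-invariant. Hence $\pi_1(k) = \rho(k)$ is unitary, and the $KAK$ decomposition $g = k_1 a_{l(g)} k_2$ gives $\|\pi_1(g)\| = \|\pi_1(a_{l(g)})\|$. It thus suffices to prove $\|\pi_1(a_t)\| \leq C(1+|t|)$ for $t \geq 0$.

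Next, I would transfer to the open picture via the Cayley transform $\mathcal{C}\colon\overline{N}\to G/P\setminus\{-o\}$. Under this identification, $\rho(a_t)$ is conjugate to precomposition $f\mapsto f\circ \delta_{-t}$ with the canonical dilation on $\overline{N}$, where $\delta_t(X,Z)=(e^tX, e^{2t}Z)$ has Jacobian $e^{tQ}$. By $G$-equivariance of the sub-bundle $E$, the principal part of the pull-back of $\Delta_E$ is the left-invariant sub-Laplacian $\Delta_{\overline{N}}$ on $\overline{N}$, which is $\delta_t$-homogeneous of degree $2$. A direct scaling computation then yields
\[
\|\Delta_{\overline{N}}^{Q/4}(f\circ\delta_{-t})\|_{L^2(\overline{N})} \;=\; \|\Delta_{\overline{N}}^{Q/4}f\|_{L^2(\overline{N})},
\]
which holds exactly because $Q/2$ equals half the homogeneous dimension $Q$. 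In other words, the \emph{homogeneous} model $\Delta_{\overline{N}}^{Q/4}\,\rho(a_t)\,\Delta_{\overline{N}}^{-Q/4}$ is an isometry on $L^2(\overline{N})$, independently of $t$. The actual $\pi_1(a_t)$ uses $(1+\Delta_E)^{Q/4}$ in place of $\Delta_{\overline{N}}^{Q/4}$, and carries conformal-factor corrections between $d\mu_0$ and the Haar measure on $\overline{N}$. At the critical exponent $Q/2$ on a space of homogeneous dimension $Q$, such non-homogeneous corrections are expected to contribute a single logarithmic factor in scale --- the same endpoint mechanism responsible for the embedding $\hill^{Q/2}\hookrightarrow \mathrm{BMO}$ rather than $\hill^{Q/2}\hookrightarrow L^\infty$, and for the logarithmic norm of dilations on BMO.

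The main obstacle is turning this heuristic into a quantitative bound. Concretely, I would write $\pi_1(a_t) = \Delta_{\overline{N}}^{Q/4}\rho(a_t)\Delta_{\overline{N}}^{-Q/4} + E_t$, where the first summand is an isometry by the homogeneous calculation and $E_t$ is a sum of commutators of $\rho(a_t)$ with Riesz-type operators of order $Q/2$ together with smooth Cayley-patching terms, and then bound $\|E_t\|\leq C(1+|t|)$ by endpoint Hardy--Littlewood--Sobolev-type estimates on the Heisenberg-type group $\overline{N}$. A parallel route, which I would try if the direct commutator approach gets stuck, is to refine the ACD04 estimate $\|\rho(a_t)\|_{\hill^\alpha}\leq C(\alpha)e^{(Q/2-\alpha)|t|}$ for $\alpha \in (-Q/2, Q/2)$: tracking how $C(\alpha)$ blows up as $\alpha\to Q/2$ and choosing $\alpha = Q/2 - 1/(1+|t|)$, together with a real-interpolation or endpoint embedding argument, should recover the linear bound at the endpoint $\alpha = Q/2$. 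A third, more representation-theoretic route would use the reducibility of $\rho_1$ at $s=1$ (trivial subrepresentation) and expand $\rho(a_t)$ in matrix coefficients on $K$-types, reading off the linear growth from the explicit principal-series formulae at the reducibility point.
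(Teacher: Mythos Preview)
Your proposal is not a proof but a sketch of three strategies, and you already flag the main obstacle. Let me address it and then compare with what the paper does.

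\textbf{The gap in your main route.} The homogeneous calculation is correct: on $\dot\hill^{Q/2}(\overline N)$ the dilations act isometrically. The difficulty is entirely in the passage from this homogeneous open-picture model to $(1+\Delta_E)^{Q/4}\rho(g)(1+\Delta_E)^{-Q/4}$ on the compact picture. At the critical exponent $\alpha=Q/2$ several things fail simultaneously: multiplication by smooth cut-offs is not bounded on $\dot\hill^{Q/2}(\overline N)$ (the paper notes this explicitly later), the Cayley Jacobian $B(X,Z)^{-Q/2}$ enters with exactly the borderline power, and the ``Riesz-type'' commutators you propose are not lower order in any useful sense. Writing $\pi_1(a_t)=I_t+E_t$ with $I_t$ an isometry does not isolate a tractable remainder; the correction $E_t$ carries the full difficulty of the endpoint. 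The analogy with the logarithmic growth of dilations on BMO is suggestive but does not transfer: you would need a sharp endpoint bound for a composite pseudodifferential operator of order $Q/2$, not merely a function-space embedding. Your second route (tracking the blow-up of the ACD constant as $\alpha\uparrow Q/2$ and optimizing) would, if it works at all, typically give $C(1+l(g))^{Q/2}$ rather than linear growth, since the constant in \cite[Theorem 3.6]{ACD04} depends on $\lceil\alpha\rceil$ derivatives.

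\textbf{What the paper does.} The paper's argument is your third route, but not via matrix coefficients. It replaces the $\hill^{Q/2}$-norm by the equivalent norm
\[
\|\xi\|'^2=\|\xi\|_{L^2}^2+\|(1+\Delta_E)^{Q/4-1/2}d_E\xi\|_{\Gamma_{L^2}(E^*)}^2,
\]
whose second summand is a $G$-equivariant seminorm with kernel exactly $\mathbb C 1_{G/P}$. Because $d_E$ intertwines $\rho$ with the natural $G$-action on sections of $E^\ast$, the quotient $W'/\mathbb C1$ embeds isometrically into a \emph{non-spherical} principal series on $\Gamma(E^\ast)$ at Sobolev order $Q/2-1$, which lies strictly inside Cowling's strip and is therefore uniformly bounded (this is taken from \cite{N20}). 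Thus $\rho$ on $\hill^{Q/2}$ is realized as an extension of the trivial representation by a uniformly bounded one; writing $\rho(a_1^n)$ in upper-triangular block form and summing the off-diagonal telescoping series gives the linear bound in $n$, and $KAK$ finishes. The key idea you are missing is this specific equivalent norm: it simultaneously exposes the trivial subrepresentation and shifts the quotient to a \emph{sub-critical} Sobolev exponent on a different bundle, where uniform boundedness is already available.
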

\begin{proof} Recall \[
\pi_1(g)=(1+\Delta_E)^{(Q/4)}\rho(g)(1+\Delta_E)^{-(Q/4)}.
\]
The left-translation $\rho(g)$ defines a bounded representation of $G$ on the Sobolev space $\hill^{\alpha}(G/P)$. Our assertion is that
\[
||\rho(g)||_{\hill^{Q/2}(G/P)\to \hill^{Q/2}(G/P)} \leq C(1+l(g)). 
\]
This follows from the following two lemmas.
\end{proof}

\begin{lemma} Define a new Hilbert space norm $||\,\,||'$ on $C^\infty(G/P)$ by
\[
||\xi||'^2=||\xi||^2_{L^2(G/P)} + ||(1+\Delta_E)^{Q/4-1/2}d_E\xi||^2_{\Gamma_{L^2}(E^\ast)}.
\]
The norm $||\,\,||'$ and the Sobolev norm $||\,\,||_{\hill^{Q/2}(G/P)}$ are equivalent. Here, $\Gamma_{L^2}(E^\ast)$ is the Hilbert space completion of $\Gamma(E^\ast)$ with respect to the $K$-invariant metric and a K-invariant sub-Laplacian $\Delta_E$ on $\Gamma_{L^2}(E^\ast)$ is defined as $\Delta_E=\nabla_E^\ast\nabla_E$ where a differential operator $\nabla_E\colon \Gamma(E^\ast) \to \Gamma(E^\ast \otimes E^\ast)$ is the composition of a fixed K-invariant connection $\nabla$ on $E^\ast$ and the restriction map from $T^\ast (G/P)$ to $E^\ast$.
\end{lemma}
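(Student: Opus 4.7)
The plan is to unwind the Sobolev norm on functions using $\Delta_E = d_E^\ast d_E$ together with functional calculus, and then to bridge to the sub-Laplacian on sections of $E^\ast$. For clarity I write $\Delta_0$ for $\Delta_E$ acting on $C^\infty(G/P)$ and $\Delta_1 := \nabla_E^\ast \nabla_E$ for the sub-Laplacian on $\Gamma(E^\ast)$; both are $K$-invariant, non-negative, self-adjoint on their respective $L^2$-spaces.

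The first step is an elementary scalar comparison: the two functions $x \mapsto (1+x)^{Q/2}$ and $x \mapsto 1 + x(1+x)^{Q/2-1}$ are bounded above and below by positive multiples of each other on $[0,\infty)$, since $Q/2 = 2n+1 \geq 1$. Applied via the spectral theorem to $\Delta_0$ this yields
\[
\|(1+\Delta_0)^{Q/4}\xi\|^2_{L^2} \;\sim\; \|\xi\|^2_{L^2} + \langle \Delta_0(1+\Delta_0)^{Q/2-1}\xi,\xi\rangle_{L^2}.
\]
Since $\Delta_0 = d_E^\ast d_E$ commutes with its own functional calculus, the second summand rewrites as
\[
\langle (1+\Delta_0)^{Q/2-1}d_E^\ast d_E \,\xi,\,\xi\rangle_{L^2} \;=\; \|d_E (1+\Delta_0)^{Q/4-1/2}\xi\|^2_{\Gamma_{L^2}(E^\ast)}.
\]
Thus the lemma is reduced to the single norm equivalence
\[
\|d_E (1+\Delta_0)^{Q/4-1/2}\xi\|_{\Gamma_{L^2}(E^\ast)} \;\sim\; \|(1+\Delta_1)^{Q/4-1/2} d_E \xi\|_{\Gamma_{L^2}(E^\ast)},
\]
i.e.\ to the assertion that one may interchange $d_E$ with the fractional power, at the cost of an error controlled by $\|\xi\|_{L^2}$.

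This interchange is the main obstacle and the only point where something beyond functional calculus is needed. Although $d_E$ does not exactly intertwine $\Delta_0$ and $\Delta_1$, both sub-Laplacians are $K$-invariant and share the same Heisenberg principal symbol (the square of the sub-Riemannian gradient along $E$), so the commutator $\Delta_1 d_E - d_E \Delta_0$ is a $K$-invariant differential operator of Heisenberg order strictly lower than that of $d_E \Delta_0$ itself. I would make this precise via the Helffer--Sj\"ostrand representation of $(1+\Delta_j)^{Q/4-1/2}$ as a contour integral of resolvents $(z - \Delta_j)^{-1}$; commuting $d_E$ through each resolvent produces an error governed by the lower-order commutator, and after integrating over the contour this error is subordinate to the leading term together with $\|\xi\|_{L^2}$.

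An alternative, and perhaps cleaner, route exploits the fact that $G/P = K/M$ is a compact homogeneous space: both $L^2(G/P)$ and $\Gamma_{L^2}(E^\ast)$ decompose into finite-dimensional $K$-isotypic components on which $d_E$, $\Delta_0$ and $\Delta_1$ all act as explicit $K$-equivariant operators. The equivalence can then be checked one $K$-type at a time and reduces to a comparison of the scalar eigenvalues of $\Delta_0$ and $\Delta_1$ on the image of $d_E$, in the spirit of the representation-theoretic computations of \cite{Julg19}.
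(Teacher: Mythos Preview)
Your reduction via functional calculus is correct and pleasant: the problem does come down to comparing $\|d_E(1+\Delta_0)^{Q/4-1/2}\xi\|$ with $\|(1+\Delta_1)^{Q/4-1/2}d_E\xi\|$. But the step where you propose to control this via a Helffer--Sj\"ostrand commutator argument has a genuine gap. You infer from $\sigma_H(\Delta_0)=\sigma_H(\Delta_1)$ that $\Delta_1 d_E - d_E\Delta_0$ drops in Heisenberg order. That inference is valid in the classical (commutative) symbol calculus, but the Heisenberg principal symbol is a \emph{noncommutative} object---a left-invariant operator on the osculating nilpotent group---so $\sigma_H(\Delta_1)\sigma_H(d_E)\neq\sigma_H(d_E)\sigma_H(\Delta_0)$ in general. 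Concretely, in a horizontal frame the difference is $\sum_{j,k}[E_k^2,E_j]f\cdot e^j$, and since $[E_k,E_j]\in\overline{\mathfrak{n}}_2$ has Heisenberg order~$2$, each term $E_k[E_k,E_j]+[E_k,E_j]E_k$ has order~$3$, the \emph{same} as $d_E\Delta_0$. The Helffer--Sj\"ostrand integral then produces an error of the same size as the leading term and the argument does not close.

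The paper sidesteps this entirely. Instead of pushing $d_E$ through a fractional power, it writes the transition between the two norms as the pair of scalar operators
\[
(1+\Delta_E)^{-Q/4}\bigl(1+d_E^\ast\Delta_E^{Q/2-1}d_E\bigr)^{1/2},\qquad \bigl(1+d_E^\ast\Delta_E^{Q/2-1}d_E\bigr)^{-1/2}(1+\Delta_E)^{Q/4},
\]
and observes that each has weighted (Heisenberg) order zero in the calculus of \cite[Section~4.1]{Julg19}, hence is $L^2$-bounded. The point is that $1+d_E^\ast\Delta_E^{Q/2-1}d_E$ is a positive, Heisenberg-elliptic operator on \emph{functions} of weighted order $Q$, so its square root has order $Q/2$, exactly matching $(1+\Delta_E)^{Q/4}$; no intertwining of $d_E$ with pseudodifferential operators is needed. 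Your $K$-type alternative could in principle be made to work, but it would require explicit asymptotic control of the eigenvalues of both sub-Laplacians on every isotypic component---far more labor than the one-line order count the paper invokes.
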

\begin{proof} Both operators 
\[
(1+\Delta_E)^{-Q/4}(1+d_E^\ast(1+\Delta_E)^{Q/2-1}d_E)^{1/2}, \,\,\, (1+d_E^\ast(1+\Delta_E)^{Q/2-1}d_E)^{-1/2}(1+\Delta_E)^{Q/4}
\]
on $C^\infty(G/P)$ have weighted order zero in a sense of \cite[Section 4.1]{Julg19}, and thus are bounded on $L^2(G/P, d\mu_0)$. These two operators give an equivalence between $\hill^{Q/2}(G/P)$ and the completion of $C^\infty(G/P)$ with respect to $||\,\,||'$. 
\end{proof}

\begin{lemma} The left-translation $\rho(g)$ defines a bounded representation on the completion of $C^\infty(G/P)$ with respect to the new norm $||\,\,||'$ and it has linear growth.
\end{lemma}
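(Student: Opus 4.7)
The plan is to reduce the claim to the $A^+$-direction using $K$-invariance and then to bound the two summands of $||\xi||'^2$ separately. First, the norm $||\cdot||'$ is $K$-invariant, because $d\mu_0$, $\Delta_E$, $d_E$, and the fibrewise metric on $E^*$ are all $K$-invariant; hence $\rho(k)$ is unitary on the completion with respect to $||\cdot||'$ for every $k\in K$. Via the $KA^+K$ decomposition together with $l(k_1 a_t k_2)=t$, it suffices to prove
\[
||\rho(a_t)\xi||' \le C(1+t)||\xi||', \qquad t\ge 0.
\]

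The key structural input is the $G$-equivariance of the sub-bundle $E$, which yields $d_E\rho(a_t)=\rho(a_t)d_E$ (and similarly for $\Delta_E$ on $\Gamma(E^*)$). Hence
\[
||\rho(a_t)\xi||'^2
= ||\rho(a_t)\xi||^2_{L^2(G/P,\,d\mu_0)}
+ ||(1+\Delta_E)^{Q/4-1/2}\rho(a_t)(d_E\xi)||^2_{\Gamma_{L^2}(E^*)},
\]
and I would handle the two terms independently. The first equals $\int_{G/P}|\xi|^2\,\lambda_{a_{-t}}^Q\,d\mu_0$ by $d\mu_{g\cdot 0}=\lambda_g^Q\,d\mu_0$. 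Using $\lambda_{a_{-t}}(z)=|\cosh t+\sinh t\,z_n|^{-1}$ from \eqref{eq_a_t} together with a blow-up substitution $w_n=-1+2e^{-t}u$ around the point of $G/P$ fixed by $a_t^{-1}$, a direct computation shows $\int_{G/P}\lambda_{a_{-t}}^s\,d\mu_0\le C$ uniformly in $t$ for every $s\le Q+1$. H\"older's inequality with conjugate exponents $(Q+1)/Q$ and $Q+1$, combined with the subelliptic critical Sobolev embedding $\hill^{Q/2}(G/P)\hookrightarrow L^{2(Q+1)}(G/P)$ and the norm equivalence $||\cdot||'\sim||\cdot||_{\hill^{Q/2}}$ from the previous lemma, then yields $||\rho(a_t)\xi||_{L^2}\le C\,||\xi||'$ uniformly in $t$.

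The second summand reduces to showing that the conjugated operator $(1+\Delta_E)^{Q/4-1/2}\rho(a_t)(1+\Delta_E)^{-(Q/4-1/2)}$ on $\Gamma_{L^2}(E^*)$ has operator norm $O(1+t)$. The crucial point is that the weighted order $Q/2-1$ on sections is strictly below the critical value $Q/2$ that appeared on functions in Proposition \ref{prop_at1}; invoking the weighted-order pseudodifferential calculus of \cite[Section 4.1]{Julg19}, the commutators produced by this conjugation are of strictly lower weighted order and contribute at most linearly in $t$. An argument parallel to the one used for the first summand, adapted to sections of $E^*$, then closes the estimate.

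The main obstacle will be the sharp integrability threshold $s\le Q+1$ for $\int\lambda_{a_{-t}}^s\,d\mu_0$: on bare $L^2$ the operator $\rho(a_t)$ grows like $e^{Qt/2}$, and the linear (in fact uniform) bound for the first summand is only salvaged by fully exploiting the critical Sobolev regularity encoded in $||\xi||'$, carefully matched against the Jacobian factor $(1-|w_n|^2)^{(4n-5)/2}$ of $d\mu_0$ near $w_n=-1$ (the blow-up of $\lambda_{a_{-t}}$). Once this sharp integrability is in hand, the remainder is routine bookkeeping via H\"older, Sobolev embedding, and the weighted-order calculus.
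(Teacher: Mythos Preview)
Your approach differs from the paper's, and the $L^2$-summand estimate contains a genuine error. The claimed uniform bound $\int_{G/P}\lambda_{a_{-t}}^{s}\,d\mu_0\le C$ for $s\le Q+1$ rests on the density $(1-|w_n|^2)^{(4n-5)/2}$, but $w_n$ is a \emph{quaternionic} coordinate, $w_n\in\R^4\cong\mathbb{H}$; integrating out the $S^{4n-5}$-fibre over the ball $|w_n|\le1$ produces the exponent $(4n-6)/2=2n-3$, not $(4n-5)/2$. With the correct exponent your blow-up substitution gives
\[
\int_{G/P}\lambda_{a_{-t}}^{s}\,d\mu_0 \;\asymp\; e^{(s-Q)t}\qquad (s>Q/2,\ t\to\infty),
\]
so at $s=Q+1$ the integral grows like $e^{t}$; one sees the same thing in the Cayley chart via the dilation $(X,Z)\mapsto(e^{-t}X',e^{-2t}Z')$. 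Consequently the H\"older--Sobolev scheme yields only $\|\rho(a_t)\xi\|_{L^2}\le C_p\,e^{(Q/2p)t}\|\xi\|'$ for each finite $p$, i.e.\ slow exponential growth, not a linear (let alone uniform) bound. A critical Moser--Trudinger inequality would rescue this term and give $O(\sqrt t)$, but that is not the argument you propose. The second summand is also not handled: $\rho(a_t)$ is a diffeomorphism, not an operator in the Heisenberg pseudodifferential calculus, so ``commutators of lower weighted order'' is not an applicable mechanism; what is actually needed is that the $G$-action on $\Gamma(E^\ast)$ is \emph{uniformly} bounded at Sobolev level $Q/2-1$, which is a Cowling-type statement and not a calculus statement.

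The paper's proof is structural rather than analytic. The constants $\C1_{G/P}$ form a trivial sub-representation of $W'$, and $d_E$ realises the quotient $W_0=W'/\C1_{G/P}$ as a sub-representation of $\big(\Gamma(E^\ast),\ \|(1+\Delta_E)^{Q/4-1/2}\cdot\|\big)$, a principal series which is uniformly bounded because $Q/2-1$ lies strictly inside Cowling's strip (this is what is cited from \cite{N20}). An extension of a uniformly bounded representation by the trivial one automatically has at most linear growth: writing $\rho(g)=\bigl(\begin{smallmatrix}1&b(g)\\0&\sigma(g)\end{smallmatrix}\bigr)$ with $\|\sigma(g)\|\le C$, the cocycle relation gives $\|b(a_{nt_0})\|\le nC\|b(a_{t_0})\|$. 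In effect the paper bounds your second summand \emph{uniformly} and lets the first summand be the cocycle; this sidesteps the critical-exponent integrability problem entirely.
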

\begin{proof} Denote by $W'$, the completion of $C^\infty(G/P)$ with respect to $||\,\,||'$. Note that $W'$ contains the trivial sub-representation $\C 1_{G/P}$ spanned by the constant functions. Our claim follows from the fact that the induced representation on the quotient space $W_0=W'/\C1_{G/P}$ is uniformly bounded. This fact was proved in \cite[Corollary 4.6]{N20} by observing that $W_0$ is naturally viewed as a sub-representation of the representation of $G$ on (the completion of) $\Gamma(E^\ast)$ with respect to the norm $||(1+\Delta_E)^{Q/4-1/2}\xi||$ for $\xi \in \Gamma(E^\ast)$. This representation of $G$ on $\Gamma(E^\ast)$ is a principal series representation and it is uniformly bounded with respect to the given norm (See \cite[Theorem 4.5]{N20}).
\end{proof}

Let $s=1+ib$ for $b\in \R$. We have 
\[
\pi_s(g)=(1+\Delta_E)^{(Q/4)ib}(1+\Delta_E)^{(Q/4)}\lambda_g^{-(Q/2)(ib)}\rho(g)(1+\Delta_E)^{-(Q/4)}(1+\Delta_E)^{-(Q/4)ib},
\]
so
\begin{align*}
||\pi_s(g)||& =||(1+\Delta_E)^{(Q/4)}\lambda_g^{-(Q/2)(ib)}\rho(g)(1+\Delta_E)^{-(Q/4)}|| \\
& = ||(1+\Delta_E)^{(Q/4)}\lambda_g^{-(Q/2)(ib)}(1+\Delta_E)^{-(Q/4)} \pi_1(g)||.
\end{align*}

In view of Proposition \ref{prop_at1}, using $G=KA^+K$, in order to prove Theorem \ref{thm_main}, we just need to show, for any $\epsilon>0$, the existence of $C=C(\epsilon)>0$ such that for $t\geq0$ and for $b\in \R$,
\begin{equation*}
||(1+\Delta_E)^{(Q/4)}\lambda_{a_t}^{-(Q/2)(ib)} (1+\Delta_E)^{-(Q/4)}|| \leq   Ce^{\epsilon t} (1+|b|)^{Q/2}.
\end{equation*}

Let $\chi$ be a smooth function on $G/P$ which is $1$ near the point $o=[1, 0, \cdots, 0, 1]^T$ and vanishes outside a small neighborhood of $o$. 
\begin{lemma}  There is $C>0$ such that for $t\geq0$ and for $b\in \R$,
\[
||(1+\Delta_E)^{(Q/4)}(1-\chi)\lambda_{a_t}^{-(Q/2)(ib)}(1+\Delta_E)^{-(Q/4)}|| \leq C (1+|b|)^{Q/2}.
\]
\end{lemma}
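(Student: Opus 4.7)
The idea is that the cut-off $1-\chi$ removes the only singularity of $\lambda_{a_t}$, namely the point $o\in G/P$. On the support of $1-\chi$ the cocycle $\lambda_{a_t}$ is smooth with estimates uniform in $t\geq 0$, so the statement reduces to a standard multiplier estimate on the Sobolev space $\hill^{Q/2}(G/P)$; the power $(1+|b|)^{Q/2}$ will come from chain-rule bounds on the derivatives of $\lambda_{a_t}^{-(Q/2)ib}$.

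First, from \eqref{eq_a_t} and a direct expansion of $\cosh t-(\sinh t)z_n$ in $e^{\pm t}$, one checks that on the support of $1-\chi$---where $|1-z_n|\geq\delta>0$ for $z=(z_1,\ldots,z_n)\in S^{4n-1}\subset\F^n$---we have
\[
\log\lambda_{a_t}(z) = -t - \log(|1-z_n|/2) + r_t(z),
\]
with $r_t$, together with each of its derivatives, $O(e^{-2t})$ uniformly in $z$. Consequently, every derivative of order $\geq 1$ of $\log\lambda_{a_t}$ is uniformly bounded in $t\geq 0$ with respect to a fixed smooth frame of vector fields on $G/P$ (in particular a horizontal frame of $E$).

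Setting $f_t:=(1-\chi)\lambda_{a_t}^{-(Q/2)ib}$ and factoring out the unimodular scalar $e^{(Q/2)ibt}$, Fa\`a di Bruno applied to the remaining exponential yields
\[
\|f_t\|_{C^k} \leq C_k (1+|b|)^k, \qquad k\in\N,
\]
with $C_k$ independent of $t\geq 0$ and $b\in\R$. Now $(1+\Delta_E)^{Q/4}\colon\hill^{Q/2}(G/P)\to\hill^0(G/P)$ is a unitary isomorphism, so the operator norm to be bounded equals the multiplier norm $\|M_{f_t}\|_{\hill^{Q/2}\to\hill^{Q/2}}$. Since $Q/2=2n+1$ is an integer, a Leibniz-rule argument for iterated horizontal derivatives and powers of $\Delta_E$, within the weighted pseudodifferential calculus of \cite[Section 4]{Julg19}, yields
\[
\|M_{f_t}\|_{\hill^{Q/2}\to\hill^{Q/2}} \leq C\|f_t\|_{C^{Q/2}} \leq C(1+|b|)^{Q/2}.
\]

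The main obstacle is this last multiplier estimate with the sharp loss of only $Q/2$ derivatives: commutators $[(1+\Delta_E)^{Q/4},M_{f_t}]$ must be controlled inside the weighted calculus so that no additional powers of $(1+|b|)$ are generated beyond those already absorbed into $\|f_t\|_{C^{Q/2}}$. Everything else---the explicit cocycle computation, the application of Fa\`a di Bruno, and the use of $(1+\Delta_E)^{Q/4}$ as a unitary $\hill^{Q/2}\to\hill^0$---is routine.
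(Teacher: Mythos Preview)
Your proposal is correct and follows essentially the same route as the paper: factor out the unimodular scalar, observe that on the support of $1-\chi$ the remaining function is uniformly smooth in $t\geq 0$ (the paper phrases this as smooth convergence of $(1-\chi)\,|1-(\tanh t)z_n|^{(Q/2)ib}$ to $(1-\chi)\,|1-z_n|^{(Q/2)ib}$ as $t\to+\infty$, leaving the $(1+|b|)^{Q/2}$ dependence implicit where you spell it out via Fa\`a di Bruno), and conclude by a multiplier bound on $\hill^{Q/2}(G/P)$. The concern you flag as the ``main obstacle'' is not one: since $Q/2=2n+1$ is an integer, the $\hill^{Q/2}$-norm is equivalent to $\sum_{|I|\leq Q/2}\|E^I\xi\|_{L^2}$ over horizontal multi-indices, and the Leibniz rule gives $\|M_f\|_{\hill^{Q/2}\to\hill^{Q/2}}\leq C\|f\|_{C^{Q/2}}$ directly, without any appeal to the weighted pseudodifferential calculus.
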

\begin{proof}  Recall from \eqref{eq_a_t}
\[
\lambda_{a_t} =   \left| 1-(\tanh t)z_n \right|^{-1} \left(\cosh t \right)^{-1}
\]
so
\[
\lambda_{a_t}^{-(Q/2)(ib)}=     \left| 1-(\tanh t)z_n \right|^{(Q/2)ib}  \left(\cosh t \right)^{Q/2(ib)}.
\]
Note that the second factor is a constant function on $G/P$ of modulus one and that
\[
(1-\chi) \left| 1-(\tanh t)z_n \right|^{(Q/2)ib}
\]
smoothly converges as $t\to +\infty$ to the smooth function 
\[
(1-\chi) \left| 1-z_n \right|^{(Q/2)ib}
\]
on $G/P$. The claim follows from these.
\end{proof}

Thus, we reduced our proof of Theorem \ref{thm_main} to the following.
\begin{lemma}\label{lem_chi_bound} Let $\chi$ be a smooth function on $G/P$ which is supported near the point $o$ ($z_n=1$). Consider the multiplication operator
\[
\chi \lambda_{a_t}^{-(Q/2)(ib)}\colon \hill^{Q/2}(G/P) \to \hill^{Q/2}(G/P).
\]
For any $\epsilon>0$, there is $C=C(\epsilon)>0$ such that for $t\geq0$ and for $b\in \R$,
\[
||\chi \lambda_{a_t}^{-(Q/2)(ib)}||_{\hill^{Q/2}(G/P) \to \hill^{Q/2}(G/P)} \leq C e^{\epsilon t} (1+|b|)^{Q/2}.
\]
\end{lemma}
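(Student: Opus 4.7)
The strategy is to transport the problem to the Heisenberg-type group $\overline{N}$ via the Cayley transform \eqref{eq_cay} and to exploit the invariance of the homogeneous critical Sobolev norm on $\overline{N}$ under Heisenberg dilations. Since $\chi$ is supported in a small neighborhood of $o$, and the Cayley transform intertwines the $K$-invariant sub-Riemannian structure of $G/P$ near $o$ with the left-invariant Heisenberg structure of $\overline{N}$ up to smooth bounded conformal factors, for any smooth $u$ supported near $o$ one has
\[
\|u\|_{\hill^{Q/2}(G/P)}\asymp\|u\circ\mathcal{C}\|_{\hill^{Q/2}(\overline{N})},
\]
the right-hand side being defined through the Heisenberg sub-Laplacian $L$ of $\overline{N}$. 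Hence it suffices to bound the multiplier norm on $\hill^{Q/2}(\overline{N})$ of $h_t:=\chi\cdot\lambda_{a_t}^{-(Q/2)ib}$ viewed as a function on $\overline{N}$ via $\mathcal{C}$.

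A direct computation from \eqref{eq_a_t} and \eqref{eq_cay_formula}, with $\tau=\tanh t$, gives
\[
B(X,Z)\bigl(1-\tau\,z_n(X,Z)\bigr)=(1-\tau)+\tfrac12\|X\|^2+2\tau Z+(1+\tau)\bigl(\tfrac{1}{16}\|X\|^4+|Z|^2\bigr).
\]
Setting $r=e^{-t}$ and applying the Heisenberg dilation $\delta_r(X',Z'):=(rX',r^2Z')$, together with $1-\tau=2r^2/(1+r^2)$ and $\cosh t=(1+r^2)/(2r)$, one obtains, uniformly on any fixed bounded set in $(X',Z')$,
\[
\lambda_{a_t}\bigl(\mathcal{C}(\exp\delta_r(X',Z'))\bigr)=e^t\cdot\bigl|1+\tfrac14\|X'\|^2+Z'\bigr|^{-1}\cdot\bigl(1+O(r^2)\bigr),
\]
the denominator being bounded below by $1$ since its real part equals $1+\tfrac14\|X'\|^2$. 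Raising to the $-(Q/2)ib$ power absorbs $e^t$ into the unimodular constant $e^{-i(Q/2)bt}$, so up to a lower-order correction the rescaled function $h_t\circ\delta_r$ is the smooth unimodular symbol $|1+\tfrac14\|X'\|^2+Z'|^{(Q/2)ib}$, whose Heisenberg derivatives of order $k$ are $O((1+|b|)^k)$.

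The crucial structural fact is that $L$ has Heisenberg degree $2$ while $\delta_r$ has Jacobian $r^Q$, so $\|L^{Q/4}(f\circ\delta_r)\|_{L^2(\overline{N})}=\|L^{Q/4}f\|_{L^2(\overline{N})}$: the map $V_rf:=f\circ\delta_r$ is an isometry of the homogeneous critical Sobolev space $\dot{\hill}^{Q/2}(\overline{N})$, whence the multiplier norm is dilation-invariant, $\|M_h\|_{\dot{\hill}^{Q/2}}=\|M_{h\circ\delta_s}\|_{\dot{\hill}^{Q/2}}$ for all $s>0$. To bring the full support of $\chi$ to Heisenberg scale one, I decompose $\chi$ dyadically in Heisenberg distance to $o$ at scales $r_j=2^j r$ for $0\le j\lesssim t/\log 2$; applying the rescaling identity with $s=r_j$ on each annular piece reduces the estimate to the multiplier norm of a smooth unimodular function whose $k$-th Heisenberg derivatives are $O((1+|b|)^k)$ and whose support has Heisenberg width of order one (uniformly in $j$ and $t$, since the rescaled denominator never vanishes and has bounded logarithmic derivatives). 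A multiplier estimate on $\dot{\hill}^{Q/2}(\overline{N})$ at the critical index then bounds each dyadic piece by $C(1+|b|)^{Q/2}$; summing the $O(t)$ pieces contributes at most a polynomial factor in $t$, absorbed into the slack $e^{\epsilon t}$, and the passage from the homogeneous to the inhomogeneous Sobolev norm on compactly supported functions is routine.

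\textbf{Main obstacle.} The technical heart of the argument is the multiplier estimate on $\dot{\hill}^{Q/2}(\overline{N})$ at the \emph{critical} Sobolev exponent $\alpha=Q/2$, which is precisely the endpoint at which $\dot{\hill}^{Q/2}$ fails to embed into $L^\infty$: smooth bounded functions do not automatically multiply boundedly, and one must use a Littlewood--Paley or paradifferential decomposition on the Heisenberg group to extract the sharp polynomial $(1+|b|)^{Q/2}$ from the oscillatory amplitude $|\,\cdot\,|^{(Q/2)ib}$. A secondary delicate point is controlling the $O(r^2)$ corrections in the asymptotic of $\lambda_{a_t}$ uniformly across the $\sim t$ dyadic scales, and it is exactly here that the flexibility $\epsilon t$ in the exponent is consumed.
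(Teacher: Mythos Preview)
Your opening moves coincide with the paper's: transfer to $\overline{N}$ via the Cayley transform, and compute $\lambda_{a_t}\circ\mathcal{C}$ explicitly. Your rescaled asymptotic $\lambda_{a_t}(\mathcal{C}(\delta_{e^{-t}}(X',Z')))=e^t\,|1+\tfrac14\|X'\|^2+Z'|^{-1}(1+O(e^{-2t}))$ is correct and agrees with the paper's formula $\lambda_{a_t}\circ\mathcal{C}=e^{-t}B^{1/2}B_t^{-1/2}$ (Lemma~\ref{lem_cocycle_formula}). But the dyadic-decomposition-plus-dilation argument you propose after that has a genuine gap, and it is exactly the point you flag as the ``main obstacle''.

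Smooth compactly supported functions are \emph{not} bounded multipliers on the critical homogeneous space $\dot\hill^{Q/2}(\overline{N})$; the paper says so explicitly just before Claim~\ref{cl_localver}. Concretely, if $\phi\in C_c^\infty(\overline{N})$ is nonzero and $f_R\in C_c^\infty$ approximates the constant $1$ on balls of radius $R\to\infty$, then $\|f_R\|_{\dot\hill^{Q/2}}\to 0$ while $\phi f_R\to\phi$ with $\|\phi\|_{\dot\hill^{Q/2}}>0$. Each of your dyadic pieces $\chi_j\,\lambda_{a_t}^{-(Q/2)ib}$, and each of its rescalings, is precisely such a nonzero compactly supported smooth function; hence its multiplier norm on $\dot\hill^{Q/2}$ is infinite, and the step ``bound each piece by $C(1+|b|)^{Q/2}$ and sum $O(t)$ of them'' cannot be carried out. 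This is a structural failure of the endpoint, not a question of sharpness that a Littlewood--Paley or paraproduct argument could repair. If you retreat to the inhomogeneous $\hill^{Q/2}$ to avoid this, you lose the dilation invariance that was the engine of your argument.

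The paper circumvents the endpoint altogether. After reducing to the multiplier $\chi B_t^{ib}B^{-ib}$ on $\hill^{Q/2}(\overline{N})$ (Lemma~\ref{lem_localver}), it uses the equivalence $\|\xi\|_{\hill^{Q/2}}\asymp\|\xi\|_{L^2}+\sum_j\|E_j\xi\|_{\hill^{Q/2-1}}$ and Leibniz to reduce to two \emph{subcritical} estimates: $\chi B_t^{ib}B^{-ib}$ on $\dot\hill^{Q/2-1}$, and $\chi E_j(B_t^{ib}B^{-ib})$ from $\dot\hill^{Q/2-\epsilon}$ to $\dot\hill^{Q/2-1}$. Both fall under the Astengo--Cowling--Di~Blasio multiplier lemma (Lemma~\ref{lem_homog}, valid only for exponents strictly below $Q/2$), fed by the pointwise bound $|D^sB_t^{ib}|\le C\,B_t^{-s/4}(1+|b|)^s$ of Lemma~\ref{lem_tech}. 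The $\epsilon$-slack is spent in one line, $B_t^{-\epsilon/4}\le e^{\epsilon t}$, rather than distributed over dyadic scales.
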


We will prove Lemma \ref{lem_chi_bound} using a local model $\overline{N}\cong N$ of $G/P$ around the point $o$ via the Cayley transform $\mathcal{C}$ \eqref{eq_cay}.

Fix an orthonormal basis $\{E_j\}_{1\leq j \leq \mathrm{dim}(\overline{\mathfrak{n}}_1)}$ of $\overline{\mathfrak{n}}_1$. We identify $\overline{\mathfrak{n}}$ as the left-invariant vector field on $\overline{N}$. The sub-Laplacian $\Delta_{\overline{\mathfrak{n}}}$ on $\overline{N}$ is defined by
\[
\Delta_{\overline{\mathfrak{n}}}= -\sum_{j=1}^{ \mathrm{dim}(\overline{\mathfrak{n}}_1)}E_j^2
\]
on the space $C_c^\infty(\overline{N})$ of compactly supported smooth functions on $\overline{N}$. It defines  an essentially self-adjoint, positive operator on $L^2(\overline{N})$. For any $\alpha\in \R$, we define the homogeneous Sobolev space $\dot\hill^\alpha(\overline{N})$ as the completion of $C_c^\infty(\overline{N})$ by the norm
\[
||\xi||_{\dot\hill^\alpha(\overline{N})} = ||\Delta_{\overline{\mathfrak{n}}}^{\alpha/2}\xi||_{L^2(\overline{N})}.
\]
For a technical purpose, we also define the non-homogenous Sobolev space $\hill^\alpha(\overline{N})$ as the completion of $C_c^\infty(\overline{N})$ by the norm
\[
||\xi||_{\hill^\alpha(\overline{N})} = ||(1+\Delta_{\overline{\mathfrak{n}}})^{\alpha/2}\xi||_{L^2(\overline{N})}.
\]
Multiplication by any compactly supported smooth function on $\overline{N}$ defines a bounded operator on $\hill^\alpha(\overline{N})$ for any $\alpha$ \cite[Corollary 4.15]{Folland75}. It is also bounded on the homogeneous Sobolev space $\dot\hill^\alpha(\overline{N}$) for $-Q/2<\alpha<Q/2$ but not necessarily for $|\alpha|\geq Q/2$.

For any open subset $U$ of $\overline{N}$, we define $\hill^{\alpha}(U)$ and $\dot\hill^{\alpha}(U)$ as the closures of $C^\infty_c(U)$ in $\hill^{\alpha}(\overline{N})$ and  $\dot\hill^{\alpha}(\overline{N})$ respectively. Similarly, for any open subset $U_{G/P}$ of $G/P$, we define $\hill^\alpha(U_{G/P})$ as the closure of $C^\infty_c(U_{G/P})$ in $\hill^{\alpha}(G/P)$.

 We note two things. First, if $U\subset \overline{N}$ is relatively compact, the inclusion $\hill^{\alpha}(U)$ to $\dot \hill^{\alpha}(U)$ is an isomorphism for all $\alpha \geq 0$. This is because the sub-Laplacian $\Delta_{\overline{\mathfrak{n}}}$ is locally, bounded away from zero. Secondly, for any $\alpha$, locally but not globally, the Cayley transform $\mathcal{C}\colon \overline{N} \to G/P$ induces an isomorphism on Sobolev spaces $\hill^{\alpha}$. That is, the composition by the Cayley transform $\mathcal{C}$ induces an isomorphism from $\hill^{\alpha}(U_{G/P})$ to $\hill^{\alpha}(U_{\overline{N}})$ for any relatively compact open subset $U_{\overline{N}}$ and for $U_{G/P}=\mathcal{C}U_{\overline{N}}$. We only use this fact for $\alpha=Q/2$. 

Thus, Lemma \ref{lem_chi_bound} follows from the following claim. Notice that we are using the non-homogenous Sobolev space in this statement.

\begin{claim}\label{cl_localver} Let $\chi$ be a smooth compactly supported function on $\overline{N}$. For any $\epsilon>0$, there is $C=C(\epsilon)>0$ such that for $t\geq0$ and for $b\in \R$, the multiplication operator
\[
\chi  (\lambda_{a_t}\circ \mathcal{C})^{-(Q/2)(ib)}  \colon \hill^{Q/2}(\overline{N}) \to \hill^{Q/2}(\overline{N}) 
\]
has norm bound
\[
||\chi  (\lambda_{a_t}\circ \mathcal{C})^{-(Q/2)(ib)} ||_{\hill^{Q/2}(\overline{N}) \to \hill^{Q/2}(\overline{N})} \leq C e^{\epsilon t} (1+|b|)^{Q/2}.
\]
\end{claim}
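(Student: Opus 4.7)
The plan is to reduce Claim \ref{cl_localver} to a quantitative $L^p$-estimate on iterated horizontal derivatives of the phase $\phi_t:=\log(\lambda_{a_t}\circ\mathcal{C})$. Since $Q/2=2n+1$ is a positive integer, the norm $\|\cdot\|_{\hill^{Q/2}(\overline{N})}$ is equivalent, locally on the compact support of $\chi$, to the sum of $L^2$-norms of all iterated horizontal derivatives $E_{j_1}\cdots E_{j_k}$ of length $k\leq Q/2$, where the $E_j\in\overline{\mathfrak{n}}_1$ form an orthonormal basis. The task is thus to bound, for each $k\leq Q/2$,
\[
\bigl\|E^{\vec{i}}\bigl(\chi F_t^{-ib(Q/2)}\xi\bigr)\bigr\|_{L^2}\leq C_\epsilon\,e^{\epsilon t}(1+|b|)^{Q/2}\,\|\xi\|_{\hill^{Q/2}},
\]
where $F_t:=\lambda_{a_t}\circ\mathcal{C}$; the dominant case is $|\vec{i}|=Q/2$.

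Expanding via Leibniz produces a finite sum of terms $E^{\vec{\alpha}}(\chi F_t^{-ib(Q/2)})\cdot E^{\vec{\beta}}\xi$ with $|\vec{\alpha}|+|\vec{\beta}|=Q/2$. For each, Hölder's inequality with the pair $(Q/|\vec{\alpha}|,\,Q/|\vec{\beta}|)$ combined with the sharp Heisenberg-group Sobolev embedding $\hill^{Q/2-|\vec{\beta}|}\hookrightarrow L^{Q/|\vec{\beta}|}$ (for $|\vec{\beta}|\geq 1$) controls the $\xi$-factor by $\|\xi\|_{\hill^{Q/2}}$. The chain rule applied to $F_t^{-ib(Q/2)}=e^{-ib(Q/2)\phi_t}$ shows that $|E^{\vec{\alpha}}(\chi F_t^{-ib(Q/2)})|$ is majorized by $|b|^{|\vec{\alpha}|}$ times a polynomial in $\{E^{\vec{\gamma}_l}\phi_t\}$ of total weight $|\vec{\alpha}|$, with derivatives of $\chi$ as bounded prefactors; a further internal Hölder pairing reduces the whole problem to the single analytic estimate
\[
\bigl\|E^{\vec{\gamma}}\phi_t\bigr\|_{L^{Q/|\vec{\gamma}|}(\mathrm{supp}\,\chi)}\leq C\,t^{|\vec{\gamma}|/Q},\qquad 1\leq|\vec{\gamma}|\leq Q/2.
\]

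To prove this core estimate I use the explicit formula $F_t=B/|N|$ obtained from \eqref{eq_cay_formula} and \eqref{eq_a_t}:
\[
|N|^2=\bigl[(1+r)(e^{-t}+re^t)+e^t|Z|^2\bigr]^2+4\sinh^2\! t\cdot|Z|^2,\quad r=\|X\|^2/4,\quad B=(1+r)^2+|Z|^2.
\]
The real-part expression yields the uniform lower bound $|N|\gtrsim e^{-t}$, and a direct computation with the left-invariant horizontal fields $E_j\in\overline{\mathfrak{n}}_1$ gives the pointwise estimate $|E^{\vec{\gamma}}\phi_t|\lesssim(\rho_K+e^{-t})^{-|\vec{\gamma}|}$ on $\mathrm{supp}\,\chi$, where $\rho_K=(\|X\|^4/16+|Z|^2)^{1/4}$ is the Heisenberg--Korányi gauge. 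The $L^{Q/|\vec{\gamma}|}$-integral then reduces, up to constants, to $\int_0^1\rho^{Q-1}(\rho+e^{-t})^{-Q}\,d\rho$, which evaluates via the substitution $u=\rho e^t$ to $O(t)$; taking $|\vec{\gamma}|/Q$-roots gives the advertised $t^{|\vec{\gamma}|/Q}$. Reassembling the Leibniz terms produces the total bound $C(1+|b|)^{Q/2}\,t^{1/2}\,\|\xi\|_{\hill^{Q/2}}$, which is dominated by $C_\epsilon e^{\epsilon t}(1+|b|)^{Q/2}\,\|\xi\|_{\hill^{Q/2}}$ for any $\epsilon>0$.

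The main obstacle is the critical Leibniz endpoint $|\vec{\alpha}|=Q/2$, $|\vec{\beta}|=0$: the Hölder partner exponent would be $\infty$, but $\hill^{Q/2}\not\hookrightarrow L^\infty$ at the critical Sobolev index. This is circumvented by replacing the failing pair $(2,\infty)$ by $(2+\delta,\,2(2+\delta)/\delta)$ for arbitrarily small $\delta>0$ and invoking the subcritical embedding $\hill^{Q/2}\hookrightarrow L^p$ for $p<\infty$; enlarging the integration exponent of the near-critical singular function $(\rho_K+e^{-t})^{-Q/2}$ from $2$ to $2+\delta$ produces an additional loss $e^{\epsilon(\delta)t}$ with $\epsilon(\delta)\to 0$ as $\delta\to 0$, which is absorbed into the target $e^{\epsilon t}$ by choosing $\delta$ small. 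A secondary but still substantial obstacle is the verification of the pointwise bound $|E^{\vec{\gamma}}\phi_t|\lesssim(\rho_K+e^{-t})^{-|\vec{\gamma}|}$: it requires tracking carefully how the left-invariant horizontal fields $E_j$ interact with the explicit quaternionic algebra of $|N|^2$, and in particular verifying that the regularization scale $e^{-t}$ is the correct effective lower gauge uniformly in both horizontal and central directions on $\overline{N}$.
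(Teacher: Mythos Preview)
Your approach is correct and arrives at the same conclusion, but the route differs from the paper's in how the critical Sobolev index $\alpha=Q/2$ is handled. The paper first rewrites $(\lambda_{a_t}\circ\mathcal{C})^{-(Q/2)ib}$ as a unimodular constant times $B_t^{(Q/4)ib}B^{-(Q/4)ib}$ with $B_t(X,Z)=(e^{-2t}+\|X\|^2/4)^2+|Z|^2$, then splits $\|\xi\|_{\hill^{Q/2}}\sim\|\xi\|_{L^2}+\sum_j\|E_j\xi\|_{\hill^{Q/2-1}}$ and invokes the ready-made Mikhlin-type multiplier theorem of Astengo--Cowling--Di Blasio (their Theorem~3.6, stated here as Lemma~\ref{lem_homog}) on $\dot\hill^\beta\to\dot\hill^\alpha$ for $\alpha\leq\beta<Q/2$; the critical endpoint is avoided by interposing a cutoff $\chi_0\colon\hill^{Q/2}\to\hill^{Q/2-\epsilon}$, and the $e^{\epsilon t}$ loss then emerges from the single inequality $B_t^{-\epsilon/4}\leq e^{\epsilon t}$. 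Your argument instead expands fully via Leibniz, pairs each term through H\"older with the Folland--Stein embedding $\hill^{Q/2-k}\hookrightarrow L^{Q/k}$, and treats the failing endpoint $(|\vec\alpha|,|\vec\beta|)=(Q/2,0)$ by perturbing the H\"older pair to $(2+\delta,\,2(2+\delta)/\delta)$ and using the subcritical embedding $\hill^{Q/2}\hookrightarrow L^p$ for $p<\infty$. Both methods rest on the identical pointwise input $|D^s\log B_t|\lesssim B_t^{-s/4}\asymp(\rho_K+e^{-t})^{-s}$ (the paper's Lemma~\ref{lem_tech}/\ref{lem_prep}, your ``secondary obstacle''), and both produce the $e^{\epsilon t}$ loss at exactly the same critical-index obstruction. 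The paper's version is shorter because it outsources the H\"older/embedding bookkeeping to the cited multiplier lemma; yours is more self-contained and makes the mechanism of the critical failure explicit, at the cost of more indices to track. One cosmetic point: your displayed expression for $|N|^2$ is not quite the paper's $e^{2t}B\,B_t$, but since you only use the consequences $|N|\gtrsim e^{-t}$ and the derivative bound $|E^{\vec\gamma}\phi_t|\lesssim(\rho_K+e^{-t})^{-|\vec\gamma|}$, which hold, this does not affect the argument.
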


In order to prove this claim, we give an explicit expression of $\lambda_{a_t}\circ \mathcal{C}$ on $\overline{N}$. From now on, when there is no confusion, we identify $\overline{N}$ with $\overline{\mathfrak{n}}$ via the exponential map and use the coordinates $(X, Z)\in \F^{n-1}\oplus \mathrm{Im}(\F)$ for $\overline{\mathfrak{n}}$ to express the element $\exp(X, Z)$ in $\overline{N}$ as well.

\begin{lemma}\label{lem_cocycle_formula}(c.f.  \cite[Corollary 3.9]{ACD04}) For any $t \in \R$, for any $(X, Z)\in \F^{n-1}\oplus \mathrm{Im}(\F)$, we have 
\[
(\lambda_{a_t}\circ \mathcal{C})(X, Z) = e^{-t}\frac{B(X, Z)^{1/2}}{B_t(X, Z)^{1/2}}
\]
where
\[
B(X, Z)=(1+||X||^2/4)^2 + |Z|^2,  \,\, B_t(X, Z)=(e^{-2t}+||X||^2/4)^2 + |Z|^2.
\]
\end{lemma}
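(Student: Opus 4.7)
The strategy is a direct computation: substitute the Cayley transform into the cocycle formula \eqref{eq_a_t} and simplify the resulting modulus using one elementary hyperbolic identity.

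First, \eqref{eq_a_t} gives $\lambda_{a_t}(z) = |\cosh t - (\sinh t) z_n|^{-1}$ for $z = [1, z_1, \ldots, z_n]^T \in G/P$. Reading off the last coordinate of $\mathcal{C}(\exp(X,Z))$ from \eqref{eq_cay_formula} yields $z_n = B(X, Z)^{-1}(1 - \|X\|^4/16 - |Z|^2 - 2Z)$, so
\[
B(X,Z)\bigl(\cosh t - (\sinh t)\, z_n\bigr) = (\cosh t)\, B(X,Z) - (\sinh t)\bigl(1 - \tfrac{\|X\|^4}{16} - |Z|^2 - 2Z\bigr).
\]
Writing $r = \|X\|^2/4$ so that $B = (1+r)^2 + |Z|^2$ and $\|X\|^4/16 = r^2$, and then collecting terms by powers of $e^{\pm t}$, I expect the right-hand side to rearrange to
\[
e^t \bigl[(1+r)(e^{-2t}+r) + |Z|^2\bigr] \;+\; 2(\sinh t)\, Z.
\]

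The key point is that the first summand above is a real scalar while $2(\sinh t)Z$ is a purely imaginary quaternion, so the two summands are orthogonal in $\F$. Therefore taking modulus squared gives
\[
B(X,Z)^2\, |\cosh t - (\sinh t)\, z_n|^2 \;=\; e^{2t}\bigl[(1+r)(e^{-2t}+r) + |Z|^2\bigr]^2 \;+\; 4(\sinh t)^2 |Z|^2.
\]

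It then remains to show that this equals $e^{2t}\, B(X,Z)\, B_t(X,Z)$. Expanding both $B\,B_t = ((1+r)^2 + |Z|^2)((e^{-2t}+r)^2 + |Z|^2)$ and the square on the right, the terms $(1+r)^2(e^{-2t}+r)^2 + 2(1+r)(e^{-2t}+r)|Z|^2 + |Z|^4$ cancel, reducing the identity to
\[
(1+r)^2 + (e^{-2t}+r)^2 - 2(1+r)(e^{-2t}+r) \;=\; 4 e^{-2t}(\sinh t)^2,
\]
that is $(1 - e^{-2t})^2 = 4 e^{-2t} \sinh^2 t$, which is immediate. Taking square roots yields $|\cosh t - (\sinh t)z_n| = e^t B_t^{1/2}/B^{1/2}$ on the image of $\mathcal{C}$, from which the claimed formula follows. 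The computation is purely algebraic; the only point of mild care is quaternionic arithmetic, but since the imaginary part $2(\sinh t)Z$ cleanly separates from the real part, no real obstacle arises.
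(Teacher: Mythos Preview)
Your computation is correct. The paper's own proof takes a different route: rather than substituting \eqref{eq_cay_formula} into the explicit cocycle formula \eqref{eq_a_t}, it interprets $\lambda_{a_t}^Q$ as the Jacobian of $a_{-t}$ on $G/P$, observes that conjugation by $a_{-t}$ acts on $\overline N$ as the dilation $\delta_t\colon (X,Z)\mapsto(e^tX,e^{2t}Z)$, and then applies the chain rule to $\mathcal C\circ\delta_t=a_{-t}\circ\mathcal C$ together with the known Jacobians $J_{\delta_t}=e^{Qt}$ and $J_{\mathcal C}=B^{-Q/2}$. That argument never touches $z_n$ or any hyperbolic identity; its virtue is that it explains structurally why $B_t(X,Z)=e^{-4t}B(e^tX,e^{2t}Z)$ appears. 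Your direct computation, by contrast, is entirely elementary and self-contained---and in fact the paper includes a Remark immediately after its proof that sketches essentially your calculation (using $r=\tanh t$ rather than $\cosh t,\sinh t$) to reconcile the two pictures. So your approach is not only valid but is the one the authors themselves record as the ``link'' with \eqref{eq_a_t}.
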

\begin{proof} Recall that
\[
d\mu_{a_t0}=(a_{-t})^\ast d\mu_0= \lambda_{a_t}^Qd\mu_0.
\]
That is, $ \lambda_{a_t}^Q$ is the Jacobian $J_{a_{-t}}$ of the map $a_{-t}$ on $G/P$ (with respect to $d\mu_0$). Let $\delta_{t}\colon \overline{N}\to \overline{N}$ be the map $(X, Z)\to (e^tX, e^{2t}Z)$. We have
\[
\mathrm{Ad}_{a_{-t}}=\delta_{t}
\]
on $\overline{N}$. By the chain rule, we have
\[
\lambda_{a_t}^Q\circ \mathcal{C} = J_{a_{-t}}\circ\mathcal{C} =J_{\mathcal{C}}\circ \delta_t \cdot J_{\delta_t} \cdot J_{\mathcal{C}}^{-1}
\]
on $\overline{N}$ where $J_{\mathcal{C}}$ and $J_{\delta_t}$ are the Jacobian of the map $\mathcal{C}$ and $\delta_t$ respectively. We have
\[
J_{\delta_t}=e^{Qt}
\]
and
\[
J_{\mathcal{C}}= B(X, Z)^{-Q/2}
\]
(see the end of Section 2.3 of \cite{ACD04}). Thus,
\[
\lambda_{a_t}^Q\circ \mathcal{C}  = J_{\mathcal{C}}\circ \delta_t \cdot J_{\delta_t} \cdot J_{\mathcal{C}}^{-1} =  e^{Qt} \left( \frac{B(X, Z)}{B(e^{t}X, e^{2t}Z)} \right)^{Q/2}.
\]
We get
\[
\lambda_{a_t}\circ \mathcal{C}  = e^{t} \frac{B(X, Z)^{1/2}}{B(e^tX, e^{2t}Z)^{1/2}} =  e^{-t}\frac{B(X, Z)^{1/2}}{B_t(X, Z)^{1/2}}.
\]
\end{proof}

\begin{remark}
Recall
\[
\lambda_{a_t}=e^{-\gamma_{0,a_t0}}=\vert \cosh t-\sinh t\vert^{-1}=\vert 1-(\tanh t) z_n\vert^{-1}(\cosh t)^{-1}.
\]
The link with Lemma \ref{lem_cocycle_formula} is the following. For $z_n =z_n\circ \mathcal{C}(X, Z)$, using \eqref{eq_cay_formula},
\begin{align*}
z_n = &B^{-1}(1-||X||^4/16  -|Z|^2 -2Z ) =B^{-1}(-N^2+2N -|Z|^2 -2Z ) \\
 & = -1+2B^{-1}N-2B^{-1}Z
\end{align*}
where $N=1+\Vert X\Vert^2/4$ and $B=N^2+\vert Z\vert^2$. An easy calculation yields (denote $r=\tanh t$)
\[
\lambda_{a_t}^{-2}\circ \mathcal{C}=(1-r^2)^{-1}\vert 1-rz_n\vert^2={1+r\over 1-r}(1-{4r\over 1+r}B^{-1}N+{4r^2\over (1+r)^2}B^{-1}).
\]
On the other hand 
\[
B_t= (\Vert X\Vert^2/4+e^{-2t})^2+\vert Z\vert^2=
(N-{2r\over 1+r})^2+\vert Z\vert^2=B-{4r\over 1+r}N+{4r^2\over (1+r)^2},
\]
so that $\lambda_{a_t}^{-2}\circ\mathcal{C} =e^{2t}B^{-1}B_t$ which coincides with the above formula.
\end{remark}

In the light of the formula in Lemma \ref{lem_cocycle_formula}, we have
\[
  (\lambda_{a_t}\circ \mathcal{C})^{-(Q/2)(ib)}  =  e^{Q/2(t)(ib)} B(X, Z)^{-Q/4(ib)} B_t(X, Z)^{(Q/4)ib}.
  \]
Note that the first factor $e^{Q/2(t)(ib)}$ is a constant function on $\overline{N}$ of modulus one. We reduced Claim \ref{cl_localver}, and hence Theorem \ref{thm_main}, to the following.

\begin{lemma}\label{lem_localver} Let $\chi$ be a smooth compactly supported function on $\overline{N}$. For any $\epsilon>0$, there is $C=C(\epsilon)>0$ such that for $t\geq0$ and for $b\in \R$, the multiplication operator
\[
\chi  B_t(X, Z)^{ib}B(X, Z)^{-ib} \colon \hill^{Q/2}(\overline{N}) \to \hill^{Q/2}(\overline{N}) 
\]
has norm bound
\[
||\chi  B_t(X, Z)^{ib}B(X, Z)^{-ib} ||_{\hill^{Q/2}(\overline{N}) \to \hill^{Q/2}(\overline{N})} \leq C e^{\epsilon t} (1+|b|)^{Q/2}.
\]
\end{lemma}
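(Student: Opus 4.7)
The plan is first to reduce the estimate to the homogeneous Sobolev space $\dot\hill^{Q/2}(\overline{N})$. Since the output $\chi B_t^{ib}B^{-ib}\cdot v$ is supported in $\mathrm{supp}(\chi)$, and since pre-multiplying the input by a larger smooth cutoff $\tilde\chi\equiv 1$ on $\mathrm{supp}(\chi)$ leaves the product unchanged, one may assume both input and output are supported on a fixed compact set, on which $\hill^{Q/2}$ and $\dot\hill^{Q/2}$ are equivalent. The advantage of the homogeneous norm is that it is invariant under the Heisenberg-type dilations $\delta_t\colon(X,Z)\mapsto(e^tX,e^{2t}Z)$, because $Q/2$ is the critical Sobolev exponent.

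The core idea is the scaling identity $B_t=e^{-4t}(B\circ\delta_t)$, visible in the proof of Lemma \ref{lem_cocycle_formula}, which gives $B_t^{ib}=e^{-4itb}(B^{ib}\circ\delta_t)$. Conjugating the multiplication operator by the dilation isometry $T_t\colon f\mapsto f\circ\delta_t$ on $\dot\hill^{Q/2}$, the operator norm of $M_{\chi B_t^{ib}B^{-ib}}$ equals (up to a unimodular constant) that of $M_{(\chi\circ\delta_{-t})\cdot B^{ib}\cdot(B\circ\delta_{-t})^{-ib}}$, now acting on functions supported in the expanding region $\delta_t(\mathrm{supp}\tilde\chi)$ of Heisenberg-diameter $\sim e^t$. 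On this region $B\circ\delta_{-t}$ is bounded away from $0$ and $\infty$ with horizontal derivatives of size $O(e^{-t})$, so the factor $(B\circ\delta_{-t})^{-ib}$ contributes only a harmless polynomial-in-$|b|$ correction. The task thus reduces to bounding $\|(\chi\circ\delta_{-t})\cdot B^{ib}\|_{M(\dot\hill^{Q/2})}$ by $Ce^{\epsilon t}(1+|b|)^{Q/2}$.

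Since $Q/2=2n+1$ is an integer, I would attack this reduced multiplier estimate via a Littlewood--Paley / paraproduct decomposition on $\overline{N}$, splitting $((\chi\circ\delta_{-t})B^{ib})\cdot u$ into high-low, low-high, and high-high interactions. Multiplication by the smooth unimodular function $B^{ib}$ is essentially a Mikhlin-type multiplier whose horizontal derivatives of order $k$ carry a factor $(1+|b|)^k$; this yields the $(1+|b|)^{Q/2}$ factor in the final bound. The expanding cutoff $\chi\circ\delta_{-t}$ introduces a support of Heisenberg-radius $\sim e^t$, which in a dyadic decomposition intersects only $O(t)$ shells in the Heisenberg gauge $\rho$; carefully summing the contributions over these shells should yield at most polynomial growth in $t$, absorbable into $e^{\epsilon t}$.

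The main obstacle is the critical exponent itself: $\dot\hill^{Q/2}(\overline{N})$ is not a multiplication algebra, and the standard Kato--Ponce fractional Leibniz inequality $\|fg\|_{\dot\hill^{Q/2}}\lesssim\|f\|_{L^\infty}\|g\|_{\dot\hill^{Q/2}}+\|f\|_{\dot\hill^{Q/2}}\|g\|_{L^\infty}$ requires $L^\infty$ control on one factor that is unavailable from a $\dot\hill^{Q/2}$ input alone. Furthermore, naive pointwise derivative estimates of the multiplier $(\chi\circ\delta_{-t})B^{ib}$ yield a spurious factor $e^{(Q/2)t}$, exponentially worse than the desired $e^{\epsilon t}$. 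Recovering the sub-exponential bound requires fully exploiting oscillatory cancellations of $B^{ib}$ through the paraproduct expansion, together with the smallness (in the appropriate $L^p$ sense) of the region in which horizontal derivatives of the multiplier are large; this delicate critical-exponent analysis is the technical heart of the argument.
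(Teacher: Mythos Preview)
Your dilation-conjugation step is correct: the identity $B_t=e^{-4t}(B\circ\delta_t)$ and the scale invariance of $\dot\hill^{Q/2}$ hold exactly as you say. But this reduction only trades one form of the critical-exponent difficulty for another (a multiplier concentrating at the origin becomes a multiplier living on an expanding support), and your sketch does not resolve it. You yourself note that pointwise derivative bounds on the dilated multiplier produce a spurious $e^{(Q/2)t}$, and you then appeal to unspecified ``oscillatory cancellations'' via paraproducts to recover $e^{\epsilon t}$; no concrete mechanism is offered. Even the preliminary claim that $(B\circ\delta_{-t})^{-ib}$ is a harmless factor already requires a multiplier bound on $\dot\hill^{Q/2}$ uniform over the expanding region, which is the very issue in question (recall that compactly supported smooth functions are in general \emph{not} bounded multipliers on $\dot\hill^{Q/2}$). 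This is a genuine gap: the hard step is named but not carried out.

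The paper's argument avoids the critical exponent altogether and is considerably more elementary. The key is the norm equivalence
\[
\|\xi\|_{\hill^{Q/2}}\ \sim\ \|\xi\|_{L^2}+\sum_j\|E_j\xi\|_{\hill^{Q/2-1}},
\]
which splits the problem into (i) bounding $\chi B_t^{ib}B^{-ib}$ on $\hill^{Q/2-1}$ and (ii) bounding $\chi\,E_j(B_t^{ib}B^{-ib})$ from $\hill^{Q/2}$ to $\hill^{Q/2-1}$. For (ii) one first passes from $\hill^{Q/2}$ to $\dot\hill^{Q/2-\epsilon}$ via an auxiliary cutoff $\chi_0$ with $\chi_0\chi=\chi$; then both pieces lie in the strictly subcritical range $0\le\alpha\le\beta<Q/2$, where the pointwise multiplier criterion of \cite{ACD04} (Lemma~\ref{lem_homog}) applies directly. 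The only input needed is the derivative bound $|D^s B_t^{ib}|\le C(1+|b|)^s B_t^{-s/4}$ (Lemma~\ref{lem_tech}), together with $B_t\ge\cN^4$ and the trivial inequality $B_t^{-\epsilon/4}\le e^{\epsilon t}$; the latter is precisely where the factor $e^{\epsilon t}$ enters. No Littlewood--Paley decomposition, no paraproducts, and no cancellation argument are required.
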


From now on, we focus on proving the lemma \ref{lem_localver}. We have the following easy estimates for $B_t(X, Z)$:
\begin{enumerate}
\item $B_t(X, Z)\geq e^{-4t}$,
\item $B_t(X, Z)\geq ||X||^4/16 + |Z|^2 = \cN^4$,
\end{enumerate}
where 
\[
\cN= (||X||^4/16 + |Z|^2)^{1/4}
\]
is a homogeneous gauge on $\overline{N}$.

\section{Proof of Lemma \ref{lem_localver}}
 
 Let us follow some convention from \cite{ACD04}: for a positive integer $k\in \Z$, $\mathfrak{D}^k$ denotes the set of all differential operators of the form
 \[
 E_{j_1}E_{j_2}\cdots E_{j_k},
 \]
 where $1\leq j_1, j_2, \cdots, j_k\leq \mathrm{dim}(\overline{\mathfrak{n}}_1)$. The following result from \cite{ACD04} will be the key. Note that the homogeneous Sobolev space is used in this statement.
 
 \begin{lemma}\cite[Theorem 3.6]{ACD04}\label{lem_homog} Let $0\leq \alpha \leq \beta<Q/2$. Let $m$ be a smooth function on $\overline{N}-\{0\}$. Suppose that on $\overline{N}-\{0\}$,
 \[
 |D^j m| \leq C_j\cN^{-d-j}\,\,\,\, 
 \] 
 holds for any $0\leq j \leq k= \left \lceil{\alpha}\right \rceil $, $D^j\in \mathfrak{D}^j$ and $d=\beta-\alpha$. Then, 
 \[
 ||m||_{\dot\hill^\beta(\oN) \to \dot\hill^\alpha(\oN)} \leq C(\alpha, \beta)(C_0+C_1+\cdots+C_k)
 \] 
 where the constant $C(\alpha, \beta)$ only depends on $\alpha$ and $\beta$.
\end{lemma}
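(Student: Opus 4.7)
The plan is to establish the integer-$\alpha$ case by a Leibniz expansion combined with a Hardy inequality on $\oN$, and then reach non-integer $\alpha$ by complex interpolation of homogeneous Sobolev spaces with the difference $d = \beta - \alpha$ held fixed throughout.

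For integer $\alpha = k$ I would begin from the standard equivalence
\[
\|g\|_{\dot\hill^k(\oN)} \simeq \sum_{D^k \in \mathfrak{D}^k} \|D^k g\|_{L^2(\oN)}
\]
on the stratified group $\oN$ (see \cite{Folland75}) and apply it to $g = mf$. Expanding $D^k(mf)$ by the Leibniz rule yields a finite sum of terms $(D^j m)(D^{k-j} f)$ for $0 \leq j \leq k$; commutators $[E_i, E_j] \in \oN_2$ produced when rearranging the horizontal derivatives carry homogeneous weight $2$ and can be absorbed into operators of the same total scaling. I would bound each summand by combining the pointwise hypothesis $|D^j m| \leq C_j \cN^{-d-j}$ with the Hardy inequality on $\oN$,
\[
\|\cN^{-\gamma} h\|_{L^2(\oN)} \leq C_\gamma \|h\|_{\dot\hill^\gamma(\oN)}, \qquad 0 < \gamma < Q/2,
\]
applied with $\gamma = d + j$ and $h = D^{k-j} f \in \dot\hill^{d+j}(\oN)$. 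The constraint $0 < d + j < Q/2$ follows from $d + k = \beta < Q/2$, and the degenerate endpoint $d = j = 0$ is handled by the trivial estimate $|m| \leq C_0$. Summing over $j$ and over the choices of $D^k$ yields $\|m\|_{\dot\hill^{k+d} \to \dot\hill^k} \leq C (C_0 + \cdots + C_k)$.

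For general $\alpha \in (k-1, k)$ with $d$ fixed, the hypotheses of the lemma at order $\alpha$ are at least as strong as those at the integer endpoints $\alpha_0 = k-1$ and $\alpha_1 = k$, so the integer case supplies the two endpoint bounds
\[
\|m\|_{\dot\hill^{k-1+d} \to \dot\hill^{k-1}} \leq C'(C_0 + \cdots + C_{k-1}),
\]
\[
\|m\|_{\dot\hill^{k+d} \to \dot\hill^{k}} \leq C'(C_0 + \cdots + C_{k}).
\]
Complex interpolation of homogeneous Sobolev spaces on $\oN$, $[\dot\hill^{s_0}, \dot\hill^{s_1}]_\theta = \dot\hill^{(1-\theta) s_0 + \theta s_1}$, obtained via the Calder\'on method together with the spectral theorem for $\Delta_{\oN}$, applied with $\theta = \alpha - (k-1)$, then delivers the claim with the advertised constant $C(\alpha, \beta)(C_0 + \cdots + C_k)$.

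The main obstacle I anticipate is the bookkeeping of commutator contributions when rearranging horizontal derivatives in the Leibniz expansion on the 2-step non-commutative group, together with a careful verification of the Hardy inequality across the full range $0 < \gamma < Q/2$; both points are classical but essential to pin down. By comparison, the interpolation step is routine once the two integer endpoint estimates are established, and the hypothesis that $\beta < Q/2$ is used precisely to keep $\gamma = d+k = \beta$ within the range of validity of the Hardy inequality at the top endpoint.
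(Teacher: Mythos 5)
The paper does not actually prove this lemma: it is quoted from \cite[Theorem 3.6]{ACD04}, with the proof deferred to \cite[Theorem 7]{AD05}. So you are supplying an argument where the paper supplies a citation. Your overall strategy --- Leibniz plus a fractional Hardy inequality for integer $\alpha$, then interpolation in $\alpha$ with $d=\beta-\alpha$ fixed --- is the natural one and the integer-order step is essentially sound. Two small remarks there: if you expand $E_{j_1}\cdots E_{j_k}(mf)$ by iterating the derivation property and keep the factors in their original order, every term is already of the form $(D^{j}m)(D^{k-j}f)$ with $D^{j}\in\mathfrak{D}^{j}$ and $D^{k-j}\in\mathfrak{D}^{k-j}$, so no commutators need to be produced or absorbed; on the other hand you are silently using that $\|D^{k-j}f\|_{\dot\hill^{d+j}}\lesssim\|f\|_{\dot\hill^{d+k}}$, i.e.\ the $L^2$-boundedness of $\Delta_{\overline{\mathfrak{n}}}^{(d+j)/2}D^{k-j}\Delta_{\overline{\mathfrak{n}}}^{-(d+k)/2}$ (an operator of weighted order zero), together with the equivalence of $\|\cdot\|_{\dot\hill^k}$ with the $\mathfrak{D}^k$-seminorm; both are standard but should be stated.

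The genuine gap is in the interpolation step. Your upper endpoint is the pair $(\dot\hill^{k+d},\dot\hill^{k})$ with $k=\lceil\alpha\rceil$, and your integer-case argument for it requires the Hardy inequality at the exponent $\gamma=k+d$, hence $k+d<Q/2$. The hypothesis of the lemma only gives $\alpha+d=\beta<Q/2$, and when $\alpha$ is not an integer one has $k>\alpha$, so $k+d$ can exceed $Q/2$: for instance with $Q/2=5$, $\alpha=4.5$, $\beta=4.9$ one gets $k=5$ and $k+d=5.4$. In that regime the endpoint estimate is not merely unproved but false in spirit --- as the paper itself notes, even multiplication by a compactly supported smooth function need not be bounded on $\dot\hill^{\gamma}(\oN)$ once $\gamma\geq Q/2$ --- so the interpolation couple you propose is not available. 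The cases actually invoked in the paper ($\alpha=Q/2-1$, which is an integer since $Q/2=2n+1$) are covered by your integer case alone, but the lemma in its stated generality is not. Closing the gap requires a different device near the top of the range, e.g.\ a dyadic decomposition of $m$ on the annuli $\cN\sim 2^{j}$ with rescaling and almost-orthogonal summation (which is essentially what the cited proof in \cite{AD05} does), or a duality/interpolation scheme that never leaves the strip $|\gamma|<Q/2$.
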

\begin{proof} This is what is proven in \cite[Theorem 3.6]{ACD04} and used in the proof of \cite[Corollary 3.9]{ACD04}. For the detail of its proof, see the proof of \cite[Theorem 7]{AD05}.
\end{proof}

The following technical estimate for the derivatives of $B_t$ very roughly means $B_t$ behaves like a homogeneous function of degree $4$.

\begin{lemma}\label{lem_tech} Let $U$ be a relatively compact open subset of $\oN$. For any positive integer $s\geq0$, there is $C=C(s)>0$ so that for any $D^s\in \mathfrak{D}^s$ and for any $(X, Z)$ in $U$,
\[
|D^s(B_t^{ib})|(X, Z)\leq C \frac{1}{B_t(X, Z)^{s/4}}(1+|b|)^{s}
\]
for all $t\geq0$ and $b\in \R$. 
\end{lemma}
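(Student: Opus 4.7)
The approach is to exploit the parabolic dilation structure on $\oN$ in order to reduce a $t$-dependent derivative estimate on $B_t$ to a purely algebraic, $t$-independent estimate on $B = B_0$. Writing $\delta_r(X, Z) = (rX, r^2 Z)$ for the parabolic dilations, one computes directly
\[
B_t(X, Z) \;=\; e^{-4t}\, B(\delta_{e^t}(X, Z)),
\]
whence $B_t^{ib}(X,Z) = e^{-4tib}\,(B^{ib} \circ \delta_{e^t})(X,Z)$. Since $\delta_r$ acts on $\overline{\mathfrak{n}}_1$ by multiplication by $r$ and is a Lie group automorphism of $\oN$, every $D^s \in \mathfrak{D}^s$ satisfies $D^s(f \circ \delta_r) = r^s\,(D^s f) \circ \delta_r$. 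Together with $B \circ \delta_{e^t} = e^{4t} B_t$, this gives
\[
|D^s(B_t^{ib})(X, Z)| \;=\; e^{ts}\, |(D^s B^{ib})(\delta_{e^t}(X, Z))|,
\]
so the lemma reduces at once to the global estimate
\[
|D^s(B^{ib})(Y, W)| \;\leq\; C_s\, (1 + |b|)^s\, B(Y, W)^{-s/4}, \qquad (Y, W) \in \oN,\ b \in \R.
\]
(This is actually stronger than the stated lemma, which only demands a bound on a relatively compact set.)

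For the global estimate I would apply the non-commutative Faà di Bruno formula to $B^{ib} = \phi(B)$ with $\phi(y) = y^{ib}$:
\[
D^s(B^{ib}) \;=\; \sum_{\pi} \phi^{(|\pi|)}(B) \prod_{\beta \in \pi} D^{\beta} B,
\]
where $\pi$ ranges over the set partitions of $\{1, \ldots, s\}$ and each $D^\beta \in \mathfrak{D}^{|\beta|}$ is a product of the vector fields indexed by $\beta$. Since $|\phi^{(k)}(y)| = |ib(ib-1)\cdots(ib-k+1)|\,y^{-k} \leq C_s(1+|b|)^s y^{-k}$ for $0 \leq k \leq s$, this reduces matters to the polynomial bound
\[
|D^l B(X, Z)| \;\leq\; C_l\, B(X, Z)^{(4-l)/4}, \qquad l \geq 1,
\]
because then $\prod_\beta B^{(4-|\beta|)/4} = B^{|\pi| - s/4}$ exactly cancels the factor $B^{-|\pi|}$ coming from $|\phi^{(|\pi|)}(B)|$, leaving $B^{-s/4}$.

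The remaining polynomial bound is handled by weighted-degree bookkeeping. Decompose
\[
B \;=\; 1 \;+\; \tfrac{1}{2}\|X\|^2 \;+\; \bigl(\tfrac{1}{16}\|X\|^4 + |Z|^2\bigr),
\]
whose three summands are $\delta_r$-homogeneous of weighted degrees $0$, $2$, $4$ (with $X$ of weight $1$ and $Z$ of weight $2$). Any $D^l \in \mathfrak{D}^l$ is itself $\delta_r$-homogeneous of weight $-l$, so the contribution of a weight-$w$ summand to $D^l B$ is a polynomial that is $\delta_r$-homogeneous of weighted degree $w - l$ when $w \geq l$, and vanishes identically otherwise (a non-zero polynomial cannot be homogeneous of negative weighted degree). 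In particular $D^l B \equiv 0$ for $l \geq 5$. For $1 \leq l \leq 4$, each surviving weighted-homogeneous component is bounded on all of $\oN$ by $C\,\cN^{w-l}$, where $\cN = (\|X\|^4/16 + |Z|^2)^{1/4}$; since $\cN^4 \leq B$ and $B \geq 1$, every such term is controlled by $C\, B^{(4-l)/4}$, completing the argument.

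The main obstacle---really the only non-mechanical step---is finding the right reduction to a $t$-independent problem. Without the factorization $B_t = e^{-4t} B \circ \delta_{e^t}$ one would have to track the $t$-dependence through every application of a vector field and verify by hand that the $e^{ts}$ factors cancel; with it, the $t$-dependence disappears after a single change of variables and the problem collapses to Faà di Bruno combined with weighted-degree counting on the single fixed polynomial $B$.
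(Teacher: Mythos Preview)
Your proof is correct. The overall architecture---Fa\`a di Bruno for $\phi\circ B$ combined with the polynomial bound $|D^lB|\leq C_lB^{(4-l)/4}$---is exactly what the paper does (its Lemma~5.1 is your polynomial bound, and its expansion of $e^{ib\log B_t}$ is Fa\`a di Bruno in disguise). Where you genuinely diverge is in the handling of the parameter $t$: the paper carries $B_t$ through the whole computation and verifies $|D^jB_t|\leq C_jB_t^{(4-j)/4}$ by hand for each $j$, checking ad hoc inequalities such as $B_t^3\geq e^{-8t}\|X\|^4/16$, and at the end uses $B_t\leq C_0$ on $U$ to adjust exponents. Your dilation identity $B_t=e^{-4t}\,B\circ\delta_{e^t}$ together with $(\delta_r)_*E_j=rE_j$ collapses all of this to the single case $t=0$, where the needed inequalities follow immediately from $B\geq 1$ and weighted homogeneity. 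This buys you a cleaner argument and a global (not merely local-in-$U$) estimate; the paper's approach, on the other hand, avoids invoking the group-automorphism property of $\delta_r$ and stays entirely within elementary calculus. One small remark: your ``non-commutative Fa\`a di Bruno'' is fine for bounds, but the blocks $\beta$ must be read as ordered index subsets (so that $D^\beta$ is a well-defined element of $\mathfrak{D}^{|\beta|}$), not merely as abstract sets.
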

\begin{proof} The proof will be given in the next (last) section.
\end{proof}
Now we use Lemma \ref{lem_homog} together with Lemma \ref{lem_tech} to prove Lemma \ref{lem_localver}, that is we show that for any smooth compactly supported function $\chi$ on $\oN$ and for any $\epsilon>0$, the multiplication operator
\[
\chi B_t^{ib}B^{-ib} \colon \hill^{Q/2}(\oN) \to \hill^{Q/2}(\oN) 
\]
has norm bound
\[
|| \chi B_t^{ib}B^{-ib} ||_{\hill^{Q/2}(\oN) \to \hill^{Q/2}(\oN)} \leq C e^{\epsilon t} (1+|b|)^{Q/2},
\]
for $C=C(\epsilon)>0$ independent of $t\geq0$ and $b\in\R$.

Note that for $\xi$ in $\hill^{Q/2}(\oN)$, the following two norms
\[
||\xi||_{\hill^{Q/2}(\oN)}, \,\,\,  ||\xi||_{\hill^0(\oN)} + \sum_{j=1}^{\mathrm{dim}(\overline{\mathfrak{n}}_1)} ||E_j \xi||_{ \hill^{Q/2-1}(\oN)}
\]
are equivalent. Using this, we see that it is enough to show that for any compactly supported smooth function $\chi$ and for any $\epsilon>0$, there is $C=C(\epsilon)>0$ so that for any $t\geq0$ and $b\in \R$,
\begin{equation}\label{eq_estimate1}
|| \chi B_t^{ib}B^{-ib}||_{\hill^{Q/2-1}(\oN) \to \hill^{Q/2-1}(\oN)} \leq C e^{\epsilon t} (1+|b|)^{Q/2},
\end{equation}
and 
\begin{equation}\label{eq_estimate2}
||  \chi E_j(B_t^{ib}B^{-ib}) ||_{\hill^{Q/2}(\oN) \to \hill^{Q/2-1}(\oN)} \leq C e^{\epsilon t} (1+|b|)^{Q/2}.
\end{equation}

By Lemma \ref{lem_tech}, on a ball $U$ which contains the support of $\chi$,
\[
|D^s(B_t^{ib})|\leq C_s B_t^{-s/4}(1+|b|)^{s} \leq  C_s(1+|b|)^{s}\cN^{-s}.
\]
It follows that there is $C>0$ such that for $0\leq s \leq Q/2-1$,
\[ 
|D^s( \chi B_t^{ib}B^{-ib})| \leq C(1+|b|)^{s}\cN^{-s}.
\]

By Lemma \ref{lem_homog}, it follows that for some $C>0$,
\[
||\chi B_t^{ib}B^{-ib}||_{\dot\hill^{Q/2-1}(\oN) \to \dot\hill^{Q/2-1}(\oN)} \leq C(1+|b|)^{Q/2-1} \leq C e^{\epsilon t} (1+|b|)^{Q/2}
\]
for all $t\geq 0$ and $b\in\R$. Since $\chi$ is compactly supported, we can replace the homogeneous Sobolev space $\dot\hill^\alpha$ by the non-homogeneous Sobolev space $\hill^\alpha$ in this inequality. This proves \eqref{eq_estimate1}.

As for  \eqref{eq_estimate2}, first note that for any compactly supported smooth function $\chi_0$,
 \[
 \chi_0\colon \hill^{Q/2}(\oN) \to \hill^{Q/2-\epsilon}(\oN)
 \]
 is bounded for any $\epsilon>0$. Taking $\chi_0$ such that $\chi_0\chi=\chi$, it is enough to bound the norm
\[
 \chi E_j(B_t^{ib}B^{-ib})  \colon  \dot\hill^{Q/2-\epsilon}(\oN) \to \dot\hill^{Q/2-1}(\oN).
\]
Again, we are using that $\chi$ is compactly supported to replace the homogeneous Sobolev space $\dot\hill^\alpha$ by the non-homogeneous Sobolev space $\hill^\alpha$.

Let $\beta=Q/2-\epsilon$, $\alpha=Q/2-1$ and $d=\beta-\alpha=1-\epsilon$. By Lemma \ref{lem_tech}, we have on a ball $U$ which contains the support of $\chi$,
\begin{align*}
 & |D^s(E_j(B_t^{ib}))|  = |D^{s+1}B_t^{ib}| \leq  C_{s+1}B_t^{-(s+1)/4} (1+|b|)^{s+1} \\
 & \leq   C_{s+1}B_t^{-\epsilon/4}  B_t^{-(s+1-\epsilon)/4}(1+|b|)^{s+1}     \leq C_{s+1}B_t^{-\epsilon/4} \cN^{-d-s}(1+|b|)^{s+1}.
\end{align*}
Moreover, we have
\[
B_t^{-\epsilon/4} \leq e^{\epsilon t}
\]
for $t\geq0$. It follows that there is $C>0$ such that for $0\leq s \leq Q/2-1$,
\[ 
|D^s( \chi E_j(B_t^{ib}B^{-ib}))| \leq Ce^{\epsilon t} \cN^{-d-s}(1+|b|)^{s+1}.
\]
Hence, by Lemma \ref{lem_homog}, it follows that for some $C>0$,
\[
||\chi E_j(B_t^{ib}B^{-ib}) ||_{\dot\hill^{Q/2-\epsilon}(\oN) \to \dot\hill^{Q/2-1}(\oN)} \leq C(1+|b|)^{Q/2}e^{\epsilon t}.
\]
This gives \eqref{eq_estimate2}.  This proves Lemma \ref{lem_localver}, and hence Theorem \ref{thm_main}, modulo the technical estimate, Lemma \ref{lem_tech}.



\section{Technical Estimate}

We give a proof of Lemma \ref{lem_tech}. We fix a relatively compact open set $U$ of $\oN$ and let $C_0>0$ so that
\[
B_t(X, Z) = (e^{-2t}+||X||^2/4)^2 + |Z|^2 \leq C_0 
\]
for all $t\geq0$.

We need a small lemma. Again, this roughly means $B_t$ behaves like a homogeneous function of degree $4$.

\begin{lemma}\label{lem_prep} On $\oN$, we have 
\begin{enumerate}
\item $|D^1B_t|\leq C_1B_t^{3/4}$,
\item $|D^2B_t|\leq C_2B_t^{2/4}$,
\item $|D^3B_t|\leq C_3B_t^{1/4}$,
\item $|D^4B_t|\leq C_4$
\item $D^5B_t=0 \,\,\, (j\geq5)$,
\end{enumerate}
for some constants $C_j\geq0$ independent of $t\geq0$ and of $D^j\in \mathfrak{D}^j$.
\end{lemma}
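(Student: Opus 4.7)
The plan is to introduce on $\overline{\mathfrak{n}}\cong\F^{n-1}\oplus\mathrm{Im}(\F)$ the parabolic weighted degree in which $X$ has weight $1$ and $Z$ has weight $2$, so that the gauge $\cN$ has weight $1$. With this grading, decompose $B_t$ into its weighted-homogeneous pieces,
\[
B_t \;=\; e^{-4t}\cdot 1 \;+\; e^{-2t}\cdot \tfrac{1}{2}\|X\|^2 \;+\; \bigl(\tfrac{1}{16}\|X\|^4 + |Z|^2\bigr),
\]
of weighted degrees $0,\,2,\,4$ with $t$-coefficients $e^{-4t},\,e^{-2t},\,1$ respectively.

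The key step is that horizontal vector fields drop the weighted degree by exactly one. From the bracket $[(X_1, Z_1), (X_2, Z_2)] = (0, \mathrm{Im}(X_1^\ast X_2))$ together with the Baker--Campbell--Hausdorff formula for the $2$-step nilpotent group $\overline{N}$, the left-invariant vector field on $\overline{N}$ associated to $v_j\in\overline{\mathfrak{n}}_1$ takes the form
\[
E_j \;=\; \partial_{v_j} \;+\; \tfrac{1}{2}\,\mathrm{Im}(X^\ast v_j)\cdot \partial_Z
\]
in the exponential coordinates. The first summand has weight $-1$ (an $X$-derivative) and the second has weight $+1-2=-1$ (linear-in-$X$ multiplication composed with a $Z$-derivative). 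Hence $E_j$ sends a weighted-homogeneous polynomial of degree $d\geq 1$ to one of degree $d-1$ and kills any constant, since every monomial $X^a Z^b$ has weighted degree $|a|+2|b|\geq 0$.

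Iterating, $D^j B_t$ is a sum of weighted-homogeneous polynomials in $(X,Z)$ of degrees $-j,\,2-j,\,4-j$ carrying the respective $t$-coefficients $e^{-4t},\,e^{-2t},\,1$, with the convention that negative degrees contribute zero. This immediately yields (5) (all surviving degrees are negative when $j\geq 5$) and (4) (at $j=4$ only the weighted-degree-$0$ piece survives, with $t$-independent coefficient, hence is a constant). For (1)--(3) I combine two elementary global inequalities on $\oN$: from $\|X\|^4/16\leq B_t$ and $|Z|^2\leq B_t$ one has
\[
\|X\|\leq 2\,B_t^{1/4}, \qquad |Z|\leq B_t^{1/2},
\]
so that any monomial in $(X,Z)$ of weighted degree $k$ is bounded by a constant times $B_t^{k/4}$; and from $B_t\geq e^{-4t}$ one has $e^{-(4-d)t}\leq B_t^{(4-d)/4}$ for $d\leq 4$. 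Putting these together, a term from the weight-$d$ piece of $B_t$ after $j$ derivatives is bounded by
\[
C\cdot e^{-(4-d)t}\cdot B_t^{(d-j)/4}\;\leq\; C\cdot B_t^{(4-d)/4}\cdot B_t^{(d-j)/4}\;=\; C\cdot B_t^{(4-j)/4},
\]
and summing over $d\in\{0,2,4\}$ with $d\geq j$ gives the required bound $C_j\,B_t^{(4-j)/4}$.

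I do not expect any real obstacle; the argument is essentially bookkeeping in the weighted grading together with the two size estimates above. The one subtle point is that $B_t$ can become arbitrarily small as $t\to+\infty$ (it is only bounded below by $e^{-4t}$), which is precisely why each $t$-dependent coefficient $e^{-(4-d)t}$ must be absorbed into a power of $B_t$ via $e^{-4t}\leq B_t$ in order to make the three contributions recombine cleanly into the single uniform bound $B_t^{(4-j)/4}$.
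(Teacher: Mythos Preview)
Your proof is correct and follows essentially the same strategy as the paper: decompose $B_t$ into its weighted-homogeneous pieces of degrees $0,2,4$ with coefficients $e^{-4t},e^{-2t},1$, use that each $E_j$ lowers the weighted degree by one, and bound the resulting pieces via $\cN^4\le B_t$ and $e^{-4t}\le B_t$. The only cosmetic difference is that the paper handles the $e^{-2t}$-terms by ad hoc inequalities (e.g.\ $B_t^3\ge e^{-8t}\|X\|^4/16$ for $j=1$), whereas you absorb every coefficient uniformly via $e^{-(4-d)t}\le B_t^{(4-d)/4}$; your version is slightly cleaner but amounts to the same computation.
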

\begin{proof}  
Let us write
\[
B_t = e^{-4t} + e^{-2t}||X||^2/2 + P_4
\]
where $P_4=\cN^{4}=||X||^4/16 + |Z|^2$.
We have 
\begin{align*}
D^1B_t &= e^{-2t}D^1(||X||^2)/2 + P_3 \\
D^2B_t & =  e^{-2t}D^2(||x||^2)/2   + P_2 \\
D^3B_t & =  P_1 \\
D^4 B_t &=  P_0
\end{align*}
where $P_d$ is a homogenous polynomial of degree $d$. Note that for each such $P_d$, we have
\[
|P_d| \leq C'_d\cN^{d} \leq C'_d B_t^{d/4}
\]
for some $C'_d>0$.  We also have 
\begin{align*}
|e^{-2t}D^1(||X||^2)/2| & \leq 2B_t^{3/4} \\
| e^{-2t}D^2(||X||^2)/2| & \leq B_t^{2/4}.
\end{align*}
The first one follows from 
\[
B_t^3 \geq  (e^{-4t} + ||X||^4/16)^3\geq e^{-8t}||X||^4/16.
\]
The second one follows from
\[
B_t \geq e^{-4t}.
\]
The claim follows from these.
\end{proof} 

\begin{proof}[Proof of Lemma \ref{lem_tech}]
For any $s>0$, we need to show that for some $C=C(s)>0$, on $U$,
\[
|D^s(B_t^{ib})|\leq C \frac{1}{(B_t)^{s/4}}(1+|b|)^{s}.
\]
For $s=0$, this is trivial.  For $s=1$, we have, on $U$,
\[
|D^1(B_t^{ib})|=|D^1e^{ib\log (B_t)}| = \frac{|D^1(B_t)|}{B_t}|b| \leq C_1\frac{ B_t^{3/4}}{B_t} (1+ |b|)= C_1B_t^{-1/4}(1+|b|).
\]
The general case for $s\geq1$ follows from that $D^se^{ib\log (B_t)}$ is a finite sum of the product
 \[
 (ib)^le^{ib\log (B_t)} \prod_{1\leq j\leq l}(D^{k_j}\log(B_t))  
\]
where $k_1+\cdots+k_l=s$ ($k_j\geq1$) and $D^{k_j} \in \mathfrak{D}^{k_j}$ and that on $U$,
\[
|D^j(\log (B_t))| \leq C'_j (B_t)^{-j/4}\,\,\, (j\geq1).
\]
We can see the latter from that $D^s(\log (B_t))$ is a finite sum of the product
\[
(B_t)^{-(m_1+\cdots+m_s)} \prod_{1\leq j \leq s}(D^{j}B_t)^{m_j}
\]
where $m_1+2m_2+\cdots + sm_s=s$ and $D^{j} \in \mathfrak{D}^{j}$ and that 
\[
\left|\frac{D^jB_t}{B_t} \right| \leq C_j(B_t)^{-j/4},
\]
that is for $j\geq1$,
\[
|D^jB_t| \leq C_j(B_t)^{1-j/4}.
\]
The last one follows from Lemma \ref{lem_prep}. In this argument, we used that $B_t\leq C_0$ on $U$ thus 
\[
B_t^{-s_1} \leq C_0^{s_2-s_1} B_t^{-s_2}
\]
on $U$ for $0\leq s_1\leq s_2$.

\end{proof}


\bibliography{Refs}
\bibliographystyle{plain}

\end{document}